\newcommand{\href}[2]{{#2}}
\newcommand{\Sec}[1]{\hyperref[sec:#1]{\S\ref*{sec:#1}}} %
\newcommand{\App}[1]{\hyperref[sec:#1]{Appendix~\ref*{sec:#1}}} %
\newcommand{\Eqn}[1]{\hyperref[eq:#1]{{\rm (\ref*{eq:#1})}}} %
\newcommand{\Part}[1]{\hyperref[part:#1]{(\ref*{part:#1})}} %
\newcommand{\Fig}[1]{\hyperref[fig:#1]{Figure~\ref*{fig:#1}}} %
\newcommand{\Tab}[1]{\hyperref[tab:#1]{Table~\ref*{tab:#1}}} %
\newcommand{\Thm}[1]{\hyperref[thm:#1]{Theorem~\ref*{thm:#1}}} %
\newcommand{\Lem}[1]{\hyperref[lem:#1]{Lemma~\ref*{lem:#1}}} %
\newcommand{\Prop}[1]{\hyperref[prop:#1]{Property~\ref*{prop:#1}}} %
\newcommand{\Cor}[1]{\hyperref[cor:#1]{Corollary~\ref*{cor:#1}}} %
\newcommand{\Def}[1]{\hyperref[def:#1]{Definition~\ref*{def:#1}}} %
\newcommand{\Alg}[1]{\hyperref[alg:#1]{Algorithm~\ref*{alg:#1}}} %
\newcommand{\Ex}[1]{\hyperref[ex:#1]{Example~\ref*{ex:#1}}} %
\newcommand{\As}[1]{\hyperref[as:#1]{Assumption~{\rm\ref*{as:#1}}}} %
\newcommand{\Reg}[1]{\hyperref[as:#1]{Condition~\ref*{reg:#1}}} %
\newcommand{\AlgLine}[2]{\hyperref[alg:#1]{line~\ref*{line:#2} of Algorithm~\ref*{alg:#1}}}
\newcommand{\AlgLines}[3]{\hyperref[alg:#1]{lines~\ref*{line:#2}--\ref*{line:#3} of Algorithm~\ref*{alg:#1}}}
\newcommand{\Sec}[1]{{\S\ref{sec:#1}}} %
\newcommand{\App}[1]{{Appendix~\ref{sec:#1}}} %
\newcommand{\Eqn}[1]{{(\ref{eq:#1})}} %
\newcommand{\Part}[1]{{(\ref{part:#1})}} %
\newcommand{\Fig}[1]{{Figure~\ref{fig:#1}}} %
\newcommand{\Tab}[1]{{Table~\ref{tab:#1}}} %
\newcommand{\Thm}[1]{{Theorem~\ref{thm:#1}}} %
\newcommand{\Lem}[1]{{Lemma~\ref{lem:#1}}} %
\newcommand{\Prop}[1]{{Property~\ref{prop:#1}}} %
\newcommand{\Cor}[1]{{Corollary~\ref{cor:#1}}} %
\newcommand{\Def}[1]{{Definition~\ref{def:#1}}} %
\newcommand{\Alg}[1]{{Algorithm~\ref{alg:#1}}} %
\newcommand{\Ex}[1]{{Example~\ref{ex:#1}}} %
\newcommand{\Reg}[1]{{R~\ref*{reg:#1}}} %
\newtheorem{assumption}[theorem]{Assumption}
\newenvironment{myproof}{\proof}{}
\newcommand{\myproofend}{\quad\endproof}
\newcommand{\Real}{\mathbb{R}}
\newcommand{\Tra}{^{\sf T}} %
\newcommand{\Inv}{^{-1}} %
\newcommand{\MI}[1]{\mathbf{#1}}
\newcommand{\MIn}[2]{\MI{#1}^{(#2)}}
\newcommand{\MInE}[3]{#1^{(#2)}_{#3}}
\newcommand{\Sn}[2]{#1^{(#2)}}
\newcommand{\Shatn}[2]{\hat #1^{(#2)}}
\newcommand{\V}[1]{{\bm{\mathbf{\MakeLowercase{#1}}}}} %
\newcommand{\Vbar}[1]{{\bm{\bar \mathbf{\MakeLowercase{#1}}}}} %
\newcommand{\Vhat}[1]{{\bm{\hat \mathbf{\MakeLowercase{#1}}}}} %
\newcommand{\VE}[2]{\MakeLowercase{#1}_{#2}} %
\newcommand{\Vn}[2]{\V{#1}^{(#2)}} %
\newcommand{\M}[1]{{\bm{\mathbf{\MakeUppercase{#1}}}}} %
\newcommand{\ME}[2]{\MakeLowercase{#1}_{#2}} %
\newcommand{\MC}[2]{\V{#1}_{#2}}
\newcommand{\Mhat}[1]{{\bm{\hat \mathbf{\MakeUppercase{#1}}}}} %
\newcommand{\MhatC}[2]{\Vhat{#1}_{#2}} %
\newcommand{\Mbar}[1]{{\bm{\bar \mathbf{\MakeUppercase{#1}}}}} %
\newcommand{\MbarC}[2]{\Vbar{#1}_{#2}} %
\newcommand{\Mn}[2]{\M{#1}^{(#2)}} %
\newcommand{\Mbarn}[2]{\Mbar{#1}^{(#2)}} %
\newcommand{\MnTra}[2]{\M{#1}^{(#2){{\sf T}}}} %
\newcommand{\MnE}[3]{\MakeLowercase{#1}^{(#2)}_{#3}} %
\newcommand{\MnC}[3]{\V{#1}^{(#2)}_{#3}} %
\newcommand{\MbarnC}[3]{\Vbar{#1}^{(#2)}_{#3}} %
\newcommand{\MnCTra}[3]{\V{#1}^{(#2){{\sf T}}}_{#3}} %
\newcommand{\T}[1]{\boldsymbol{\mathscr{\MakeUppercase{#1}}}} %
\newcommand{\Tbar}[1]{\boldsymbol{\bar \mathscr{\MakeUppercase{#1}}}} %
\newcommand{\That}[1]{\boldsymbol{\hat \mathscr{\MakeUppercase{#1}}}} %
\newcommand{\Ttilde}[1]{\boldsymbol{\tilde \mathscr{\MakeUppercase{#1}}}} %
\newcommand{\TE}[2]{\MakeLowercase{#1}_{\MI{#2}}} %
\newcommand{\Kron}{\otimes} %
\newcommand{\Khat}{\odot} %
\newcommand{\Hada}{\ast} %
\newcommand{\Divide}{\varoslash}
\newcommand{\Mz}[2]{\M{#1}_{(#2)}} %
\newcommand{\KT}[1]{\left\llbracket #1 \right\rrbracket} %
\newcommand{\KTsmall}[1]{\llbracket #1 \rrbracket} %
\newcommand{\KG}[1]{\langle #1 \rangle} %
\newcommand{\qtext}[1]{\quad\text{#1}\quad}
\newcommand{\FD}[2]{\frac{\partial #1}{\partial #2}}
\newcommand{\It}[1]{_{#1}}
\newcommand{\Itn}[2]{^{(#1)}\It{#2}}
\newcommand{\kl}{k_{\ell}}
\newenvironment{inlinemath}{$}{$}
\newcommand{\CH}[1]{\text{\rm conv}(#1)}
\newcommand{\LS}[2]{\mathcal{L}_{#1}(#2)}
\def\TX{\T{X}}
\def\MXn{\Mz{X}{n}}
\def\TM{\T{M}}
\def\MMn{\Mz{M}{n}}
\def\MAn{\Mn{A}{n}}
\def\MB{\M{B}}
\def\MBn{\Mn{B}{n}}
\def\MPi{\M{\Pi}}
\def\MPin{\Mn{\Pi}{n}}
\def\MPinTra{\MnTra{\Pi}{n}}
\def\MPhi{\M{\Phi}}
\def\MPhin{\Mn{\Phi}{n}}
\def\Vl{\V{\lambda}}
\def\ML{\M{\Lambda}}
\def\Vx{\V{x}}
\begin{document}
\title{On Tensors, Sparsity, and Nonnegative Factorizations%
  \thanks{The work of the first author was fully supported by the
    U.S.~Department of Energy Computational Science Graduate
    Fellowship under grant number DE-FG02-97ER25308. The work of the
    second author was funded by the applied mathematics program at the
    U.S.~Department of Energy and Sandia National Laboratories, a
    multiprogram laboratory operated by Sandia Corporation, a wholly
    owned subsidiary of Lockheed Martin Corporation, for the United
    States Department of Energy's National Nuclear Security
    Administration under contract DE-AC04-94AL85000.}}

\author{Eric C. Chi\footnotemark[2] \and Tamara G. Kolda\footnotemark[3]}
\maketitle

\renewcommand{\thefootnote}{\fnsymbol{footnote}}
\footnotetext[2]{Dept. Human Genetics, University of
  California, Los Angeles, CA. Email: eric.c.chi@gmail.com}
\footnotetext[3]{Sandia National Laboratories, Livermore, CA.
  Email: tgkolda@sandia.gov}
\renewcommand{\thefootnote}{\arabic{footnote}}

\begin{abstract}
  Tensors have found application in a variety of fields, ranging from
  chemometrics to signal processing and beyond. In this paper, we
  consider the problem of multilinear modeling of \emph{sparse count}
  data. Our goal is to develop a descriptive tensor factorization
  model of such data, along with appropriate algorithms and theory. To
  do so, we propose that the random variation is best described via a
  Poisson distribution, which better describes the zeros observed in
  the data as compared to the typical assumption of a Gaussian
  distribution. Under a Poisson assumption, we fit a model to observed
  data using the negative log-likelihood score. We present a new
  algorithm for Poisson tensor factorization 
  called CANDECOMP--PARAFAC Alternating Poisson Regression
  (CP-APR) that is based on a majorization-minimization approach. It
  can be shown that CP-APR is a generalization of the Lee-Seung
  multiplicative updates. We show how to prevent the algorithm from
  converging to non-KKT points and prove convergence of CP-APR under
  mild conditions. We also explain how to implement CP-APR for
  large-scale sparse tensors and present results on several data sets,
  both real and simulated.
\end{abstract}

\begin{keywords}
  Nonnegative tensor factorization, nonnegative CANDECOMP-PARAFAC,
  Poisson tensor factorization,
  Lee-Seung multiplicative updates, majorization-minimization algorithms
\end{keywords}

\pagestyle{myheadings}
\thispagestyle{plain}
\markboth{\sc E.~C.~Chi and T.~G.~Kolda}{\sc Tensors, Sparsity, and
  Nonnegative Factorizations}

\section{Introduction}
\label{sec:introduction}

Tensors have found application in a variety of fields, ranging from
chemometrics to signal processing and beyond. In this paper, we
consider the problem of multilinear modeling of \emph{sparse count}
data. For instance, we may consider data that encodes the number of
papers published by each author at each conference per year for a
given time frame \cite{DuKoAc11}, the number  
of packets sent from one IP address to another using a specific port
\cite{SuTaFa06}, or to/from and term counts on emails
\cite{BaBeBr08}. Our goal is to develop a descriptive model of such
data, along with appropriate algorithms and theory.

Let $\T{X}$ represent an $N$-way data tensor of size $I_1 \times
I_2 \times \cdots \times I_N$. We are interested in an $R$-component
nonnegative CANDECOMP/PARAFAC \cite{CaCh70,Ha70} factor model 
\begin{equation}
  \label{eq:cp}
  \TM = \sum_{r=1}^R \lambda_r \;
  \MnC{A}{1}{r} \circ \cdots \circ \MnC{A}{N}{r},
\end{equation}
where $\circ$ represents outer product and $\MnC{A}{n}{r}$ represents the $r$th column of the nonnegative
\emph{factor matrix} $\MAn$ of size $I_n \times R$.  We refer to each
summand as a \emph{component}. Assuming each factor matrix has been
column-normalized to sum to one, we refer to the nonnegative
$\lambda_r$'s as \emph{weights}.

In many applications such as chemometrics
\cite{SmBrGe04}, we fit the model to the data using a least
squares criteria, implicitly assuming that the random variation in the
tensor data follows a Gaussian distribution. In the case of sparse count data,
however, the random variation is better described via a Poisson
distribution \cite{McNe89,Ro07}, i.e.,
\begin{displaymath}
  \TE{x}{i} \thicksim \text{Poisson}(\TE{m}{i})
\end{displaymath}
rather than $ \TE{x}{i} \thicksim N(\TE{m}{i}, \sigma_{\MI{i}}^2)$,
where the subscript $\MI{i}$ is shorthand for the multi-index $(i_1,
i_2, \dots, i_N)$.
In fact, a Poisson model is a much better explanation for the zero
observations that we encounter in sparse data --- these zeros just
correspond to events that were very unlikely to be observed.
Thus, we propose that rather than using the least squares (LS) error function given by
$\sum_{\MI{i}} | \TE{x}{i} - \TE{m}{i} |^2$, for count
data we should instead minimize the (generalized) Kullback-Leibler (KL) divergence
\begin{equation}
  \label{eq:nll}
  f(\TM) = \sum_{\MI{i}} \TE{m}{i} - \TE{x}{i} \log \TE{m}{i},
\end{equation}
which equals the negative log-likelihood of the observations up to an additive constant.
Unfortunately, minimizing KL~divergence is more difficult than LS error.

\subsection{Contributions}
Although other authors have considered fitting tensor data using KL divergence
\cite{WeWe01,CiZdChPl07,ZaPe11}, we offer the following contributions:
\begin{asparaitem}
\item %
  We develop alternating Poisson regression for 
  nonnegative CP model (CP-APR). The subproblems are solved
  using a majorization-minimization (MM) approach. If the algorithm is
  restricted to a single inner iteration per subproblem, it reduces to
  the standard Lee-Seung multiplicative for KL updates \cite{LeSe99,LeSe01}
  as extended to tensors by Welling and Weber
  \cite{WeWe01}. However, using multiple inner
  iterations is shown to accelerate the method, similar to what has
  been observed for LS \cite{GiGl11}
\item %
  It is known that the Lee-Seung multiplicative updates may converge
  to a non-stationary point \cite{GoZh05}, and Lin \cite{Li07a} has
  previously introduced a fix for the LS version of the Lee-Seung method.
  We introduce a different technique
  for avoiding \emph{inadmissible zeros} (i.e., zeros that violate
  stationarity conditions) that is only a
  trivial change to the basic algorithm and prevents convergence to
  non-stationary points. This technique is straightforward to adapt to the matrix and/or LS cases as well.
\item %
  Assuming the subproblems can be solved exactly, we prove convergence
  of the CP-APR algorithm. In particular, we can show convergence even
  for sparse input data and solutions on the boundary of the
  nonnegative orthant.
\item %
  We explain how to efficiently implement CP-APR for large-scale
  sparse data. Although it is well-known how to do large-scale sparse
  tensor calculations for the LS fitting function \cite{BaKo07},
  the Poisson likelihood fitting algorithm requires new sparse tensor kernels.
  To the best of our knowledge, ours is the
  first implementation of any KL-divergence-based method
  for large-scale sparse tensors.
\item %
  We present experimental results showing the effectiveness of the
  method on both real and simulated data. In fact, the Poisson
  assumption leads quite naturally to a generative model for sparse
  data.
\end{asparaitem}

\subsection{Related Work}
\label{sec:related}

Much of the past work in nonnegative matrix and tensor analysis has
focused on the LS error \cite{PaTa94, Pa97, BrDe97, GoZh05,
  KiPa08, KiSrDh08, FrHa08}, which corresponds to an assumption of
normal independently identically distributed (i.i.d.) noise.  The
focus of this paper is KL divergence, which
corresponds to maximum likelihood estimation under an independent Poisson
assumption; see \Sec{Poisson}.  The seminal work in this
domain are the papers of Lee and Seung \cite{LeSe99, LeSe01}, which
propose very simple \emph{multiplicative} update formulas for both
LS and KL divergence, resulting
in a very low cost-per-iteration.  Welling and Weber \cite{WeWe01}
were the first to generalize the Lee and Seung algorithms to
nonnegative tensor factorization (NTF).  Applications of NTF based on
KL-divergence include EEG analysis \cite{MoHaPaAr06} and sound source
separation \cite{FiCrCo05}.
We note that generalizations of KL divergence have also been
proposed in the literature, including Bregman divergence \cite{DhSr06,dSLi08,LiCo09}
and beta divergence~\cite{CiZdChPl07, FeId11}.

In terms of convergence, Lin \cite{Li07a} and Gillis and Glienur
\cite{GiGl08} have shown convergence of two different modified
versions of the Lee-Seung method for LS. 
Finesso and Spreij \cite{FiSp06} (tensor extension in \cite{ZaPe11}) 
have shown convergence of the
Lee-Seung method for KL divergence; however, we show later that
numerical issues arise if the iterates come near to the boundary.
This is related to the problems demonstrated by Gonzalez and Zhang \cite{GoZh05}
that show, in the case of LS loss, the Lee and
Seung method can converge to non-KKT points; 
we show a similar example for KL divergence in 
\Sec{misconvergence}.

Our convergence theory is not focused on the Lee-Seung algorithm but
rather on a Gauss-Seidel approach. The closest work is that of Lin
\cite{Li07} in which he considers the matrix problem in the least
squares sense; in the same paper, he dismisses the KL divergence
problem as ill-defined but we address that issue in this paper by
showing that the convex hull of the level sets of the KL divergence problem are compact.

\section{Notation and Preliminaries}
\label{sec:notation}

\subsection{Notation}

Throughout, scalars are denoted by lowercase letters ($a$), vectors
by boldface lowercase letters ($\V{v}$), matrices by boldface capital letters ($\M{A}$), and higher-order
tensors by boldface Euler script letters ($\T{X}$).
We let $\V{e}$ denotes the vector of all ones and $\M{E}$ denotes the matrix of all ones.
The $j$th column of a matrix $\M{A}$ is denoted by $\MC{A}{j}$.
We use multi-index notation so that a boldface $\MI{i}$ represents the
index $(i_1,\dots,i_N)$.
We use subscripts to denote iteration index for infinite
sequences, and the difference between its use for an entry and its use
as an iteration index should be clear by context. 

The notation $\|\cdot\|$ refers to the two-norm for vectors or Frobenious norm for matrices, i.e., the sum of the squares of the entries. The notation $\|\cdot\|_1$ refers to the one-norm, i.e., the sum of the absolute values of the entries.

The outer product is denoted by $\circ$.
The symbols $\Hada$ and $\Divide$ represents elementwise
multiplication and division, respectively. 
The symbol $\Khat$ denotes Khatri-Rao matrix
multiplication.
The mode-$n$ matricization or unfolding of a tensor
$\T{X}$ is denoted by $\Mz{X}{n}$. %
See \App{notation-details} for further details on these operations.

\subsection{The Poisson Distribution and KL Divergence}
\label{sec:Poisson}
In statistics, count data is often best described as following a
Poisson distribution.
For a general discussion of the Poisson distribution,
see, e.g., \cite{Ro07}. We summarize key facts here.

A random variable $X$ is said to have a Poisson distribution with
parameter $\mu > 0$ if it takes integer values $x=0,1,2,\dots$ with
probability
\begin{equation}
  \label{eq:Poisson}
  P(X=x) = \frac{e^{-\mu} \mu^{x}}{x!}.
\end{equation}
The mean and variance of $X$ are both $\mu$; therefore, the variance
increases along with the mean, which seems like a reasonable
assumption for count data.  It is also useful to note that the sum of
independent Poisson random variables is also Poisson. This is
important in our case since each Poisson parameter is a multilinear
combination of the model parameters. We contrast Poisson and Gaussian
distributions in \Fig{gaussian_and_poisson}. Observe that there is good agreement between the distributions for larger values of the mean,
$\mu$. For small values of $\mu$, however, the match is not as strong and
the Gaussian random variable can take on negative values.

\begin{figure}
  \centering
  \includegraphics[width=.7\textwidth,trim=0 0 0 0,clip]{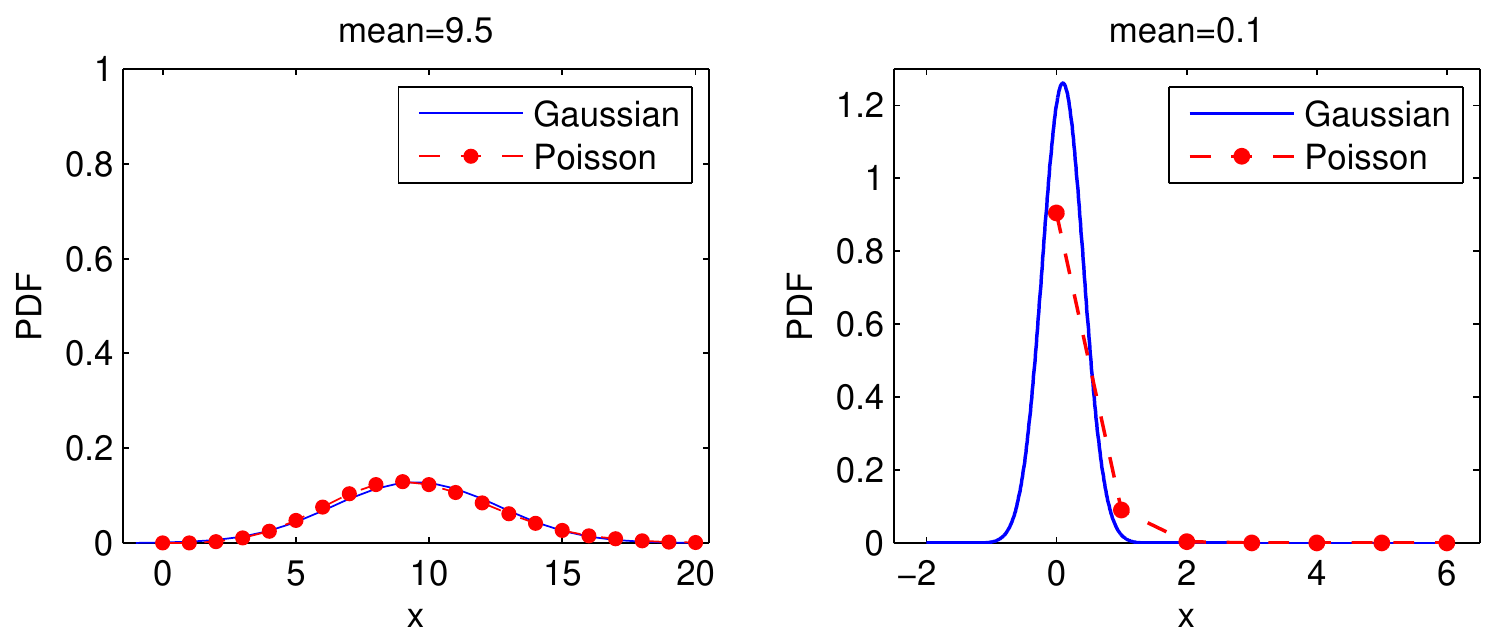}
  \caption{Illustration of Gaussian and Poisson distributions for two parameters. For both examples, we assume that the variance of the Gaussian is equal to the mean $m$.}
  \label{fig:gaussian_and_poisson}
\end{figure}

We can determine the optimal Poisson parameters by maximizing the
likelihood of the observed data. Let $\V{x}$ be a vector of observations
and let $\V{\mu}$ be the vector of Poisson parameters. (We assume that
$\mu_i$'s are not independent, else the function would entirely decouple
in the parameters to be estimated.)
Then the negative of the log of the likelihood function for
\Eqn{Poisson} is the KL divergence
\begin{equation}
  \label{eq:log-likelihood}
  \sum_i \mu_i - x_i \log \mu_i,
\end{equation}
excepting the addition of the  constant term $\sum_i \log(x_i!)$, 
which is omitted.  

Because we are working with sparse data, there are many instances for
which we expect $x_i = 0$, which leads to some ambiguity in
\Eqn{log-likelihood} if $\mu_i =0$. We assume throughout that
$0 \cdot \log(\mu) = 0$ {for all} $\mu \geq 0$.
This is for notational convenience; else, we would write
\Eqn{log-likelihood} as
\begin{inlinemath}
  \sum_i \mu_i - \sum_{i:x_i \neq 0} x_i \log \mu_i.
\end{inlinemath}

\section{CP-APR: Alternating Poisson Regression}
\label{sec:fitting}

In this section we introduce the CP-APR algorithm for fitting a
nonnegative \emph{Poisson tensor decomposition (PTF)} to count data. The algorithm employs an
alternating optimization scheme that sequentially optimizes
one factor matrix while holding the others fixed; this is 
nonlinear Gauss-Seidel applied to the PTF problem.  The subproblems are solved via a
majorization-minimization (MM) algorithm, as described in \Sec{subproblem}.

\subsection{The Optimization Problem}

Our optimization problem is defined as
\begin{gather}\label{eq:nlp}
  \min \; f(\TM) \equiv \sum_{\MI{i}} \TE{m}{i} - \TE{x}{i} \log \TE{m}{i}
  \quad \text{s.t. }
  \TM =\KT{\Vl; \Mn{A}{1},\dots,\Mn{A}{N}} \in \Omega,\\
  \label{eq:Omega}
  \begin{gathered}
  \qtext{where} \Omega = \Omega_{\lambda} \times \Omega_1 \times \cdots \times
  \Omega_n \qtext{with}  \\
  \Omega_{\lambda} = [0,+\infty)^R
  \qtext{and}
  \Omega_n = \Set{ \M{A} \in [0,1]^{I_n \times R} |
    \| \MC{a}{r} \|_1 = 1 \text{ for } r=1,\dots,R }.
  \end{gathered}
\end{gather}
Here $\TM =\llbracket{\Vl; \Mn{A}{1},\dots,\Mn{A}{N}}\rrbracket$ is
shorthand notation for \Eqn{cp} \cite{BaKo07}.  Depending on context,
$\TM$ represents the tensor itself or its constituent parts. For
example, when we say $\TM \in \Omega$, it means that that the factor
matrices have stochasticity constraints on the columns.

The function $f$ is not finite on all of $\Omega$. For example, if
there exists $\MI{i}$ such that $\TE{m}{i} = 0$ and $\TE{x}{i} > 0$,
then $f(\TM) = +\infty$.
If $\TE{m}{i} > 0$ for all $\MI{i}$ such that $\TE{x}{i}>0$, however,
then we are guaranteed that $f(\TM)$ is finite.
Consequently, we will generally wish to restrict ourselves to a domain
for which  $f(\TM)$ is finite. We define
\begin{equation}
  \label{eq:OmegaZeta}
  \Omega(\zeta) \equiv \CH{\Set{ \TM \in \Omega | f(\TM) \leq \zeta }},
\end{equation}
where $\CH{\cdot}$ denotes the convex hull.
We observe that $\Omega(\zeta) \subset \Omega$ (strict subset) since,
for example, the all-zero model is not in $\Omega(\zeta)$.
The following lemma states that $\Omega(\zeta)$ is compact for
any $\zeta > 0$; the proof is given in \App{proof-OmegaZeta}.

\begin{lemma}\label{lem:OmegaZeta}
  Let $f$ be as defined in \Eqn{nlp} and $\Omega(\zeta)$ be as defined in \Eqn{OmegaZeta}.
  For any $\zeta > 0$, $\Omega(\zeta)$ is compact.
\end{lemma}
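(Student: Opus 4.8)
The plan is to write $\Omega(\zeta) = \CH{S(\zeta)}$, where $S(\zeta) \equiv \Set{ \TM \in \Omega | f(\TM) \le \zeta }$ is the un-convexified sublevel set, to prove that $S(\zeta)$ is compact, and then to invoke the standard fact that the convex hull of a compact subset of a finite-dimensional Euclidean space is again compact (a consequence of Carath\'eodory's theorem). If $S(\zeta)$ is empty the result is immediate, so assume otherwise. Since $S(\zeta)$ lives in the finite-dimensional space parametrizing $(\Vl, \Mn{A}{1}, \dots, \Mn{A}{N})$, it suffices to show that $S(\zeta)$ is both bounded and closed.

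For boundedness, the key observation is that the factor matrices already range over the compact sets $\Omega_n$, so the only potentially unbounded coordinate is the weight vector $\Vl \in \Omega_\lambda = [0,+\infty)^R$; thus it is enough to bound $\|\Vl\|_1$ on $S(\zeta)$. Two elementary facts accomplish this. First, summing \Eqn{cp} over all $\MI{i}$ and using the column constraint $\|\MnC{A}{n}{r}\|_1 = 1$ collapses the factor matrices and yields $\sum_{\MI{i}} \TE{m}{i} = \sum_r \lambda_r = \|\Vl\|_1$. Second, since every factor-matrix entry lies in $[0,1]$, each product $\prod_{n=1}^N \MnE{a}{n}{i_n r} \le 1$, so $\TE{m}{i} \le \sum_r \lambda_r = \|\Vl\|_1$ for all $\MI{i}$. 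Writing $s \equiv \sum_{\MI{i}} \TE{x}{i}$ for the total count and combining these gives
\[
  f(\TM) \;=\; \|\Vl\|_1 - \sum_{\MI{i}} \TE{x}{i}\log\TE{m}{i} \;\ge\; \|\Vl\|_1 - s\,\log\|\Vl\|_1 .
\]
Because $t \mapsto t - s\log t$ tends to $+\infty$ as $t \to +\infty$, the constraint $f(\TM) \le \zeta$ forces $\|\Vl\|_1$ below a finite threshold depending only on $\zeta$ and $s$; hence $S(\zeta)$ is bounded.

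For closedness, I would extend $f$ to the whole parameter space by setting $f \equiv +\infty$ off $\Omega$ and show that $f$ is lower semicontinuous. The term $\|\Vl\|_1$ is continuous, and each remaining summand $-\TE{x}{i}\log\TE{m}{i}$ is the composition of the continuous (polynomial) map $\TM \mapsto \TE{m}{i}$ with $t \mapsto -\TE{x}{i}\log t$, which under the convention $0\cdot\log 0 = 0$ is continuous on $(0,+\infty)$ and lower semicontinuous at $t = 0$ (taking value $+\infty$ exactly when $\TE{x}{i} > 0$). A finite sum of lower semicontinuous functions is lower semicontinuous, and $\Omega$ is closed as a product of closed sets, so $S(\zeta)$ is closed. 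With $S(\zeta)$ closed and bounded it is compact, and $\Omega(\zeta) = \CH{S(\zeta)}$ is then compact as claimed.

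The step I expect to be the main obstacle is the boundedness estimate: everything reduces to controlling the unbounded weights $\Vl$, and the care lies in showing that the downward-pulling $-\log$ terms grow only logarithmically in $\|\Vl\|_1$ (via $\TE{m}{i} \le \|\Vl\|_1$) while the mass term $\|\Vl\|_1$ grows linearly, so that $f$ is ultimately coercive in $\|\Vl\|_1$. The secondary subtlety is the lower-semicontinuity check at the boundary $\TE{m}{i} = 0$, where the $+\infty$ values of $f$ must be handled explicitly rather than dismissed.
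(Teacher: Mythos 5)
Your proof is correct, and it handles the convex hull differently from the paper. The coercivity estimate at the heart of both arguments is the same: from $\sum_{\MI{i}}\TE{m}{i}=\V{e}\Tra\Vl$ and $\TE{m}{i}\le\V{e}\Tra\Vl$ one gets $f(\TM)\ge \V{e}\Tra\Vl - c\log(\V{e}\Tra\Vl)$ for a constant $c$ depending on $\T{X}$, which forces the weights to stay bounded on the sublevel set (the paper's Lemma~\ref{lem:lambda-bound}, with $\vartheta=(\prod_n I_n)\max_{\MI{i}}\TE{x}{i}$ in place of your $s=\sum_{\MI{i}}\TE{x}{i}$). Where you diverge is in passing to the convex hull: you prove compactness of the raw set $S(\zeta)=\Set{\TM\in\Omega\mid f(\TM)\le\zeta}$ (boundedness via coercivity, closedness via lower semicontinuity of the extended-valued $f$) and then invoke Carath\'eodory's theorem to conclude that $\CH{S(\zeta)}$ is compact in finite dimensions. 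The paper instead works with $\Omega(\zeta)$ directly: it first proves a separate lemma (Lemma~\ref{lem:finiteOnOmegaZeta}) showing by an explicit computation on convex combinations that $f$ remains \emph{bounded} on the convex hull, then deduces continuity of $f$ there, closedness of the sublevel set, and boundedness by contradiction with Lemma~\ref{lem:lambda-bound}. Your route is shorter and avoids the somewhat delicate estimate on convex combinations; it also sidesteps the paper's slightly loose justification that $\Omega(\zeta)$ is closed ``because it is a convex combination of closed sets'' (the convex hull of a closed set need not be closed in general --- it is compactness plus Carath\'eodory that makes this work, exactly as you argue). What the paper's longer route buys is the auxiliary fact that $f$ is finite and continuous on all of $\Omega(\zeta)$, which is used again in the convergence proof of Theorem~\ref{thm:cp-apr-convergence} when limits of $f$ are taken along points of the convex hull; your argument establishes compactness but would leave that finiteness claim to be proved separately where it is needed.
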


\subsection{CP-APR Main Loop: Nonlinear Gauss-Seidel}

We solve problem \Eqn{nlp} via an alternating approach, holding all
factor matrices constant except one.
Consider the problem for the $n$th factor matrix.
We note that there is scaling
ambiguity that allows us to express the same $\TM$ in different ways,
i.e.,
\begin{gather}\label{eq:M2}
  \TM = \KT{\Mn{A}{1},\dots,\Mn{A}{n-1}, \Mn{B}{n},
    \Mn{A}{n+1},\dots,\Mn{A}{N}}\\
  \text{where} \quad
  \label{eq:Bn}
  \MBn = \MAn \ML \qtext{ and } \ML = \diag(\Vl).
\end{gather}
The weights in \Eqn{M2} are omitted 
because they are absorbed into the $n$th mode. 
From \cite{BaKo07}, we can express $\TM$ as $\MMn = \MBn  \MPin$
where $\MBn$ is defined in \Eqn{Bn} and
\begin{equation}
  \label{eq:Pi}
  \MPin \equiv \left( \Mn{A}{N} \Khat \cdots  \Khat \Mn{A}{n+1} \Khat
    \Mn{A}{n-1} \Khat \cdots \Khat \Mn{A}{1} \right)\Tra.
\end{equation}
Thus, we can rewrite the objective function in \Eqn{nlp} as
\begin{displaymath}
  f(\TM) = \V{e}\Tra \left[ \MBn\MPin - \MXn \Hada \log
    \left(\MBn\MPin\right) \right] \V{e},
\end{displaymath}
where $\V{e}$ is the vector of all ones, $\Hada$ denotes the
elementwise product, and the $\log$ function is applied elementwise.
We note that it is convenient to update $\MAn$ and $\Vl$
simultaneously since the resulting constraint on $\MBn$ is simply
$\MBn \geq 0$.

Thus, at each inner iteration of the Gauss-Seidel algorithm, we
optimize $f(\TM)$ restricted to the $n$th block, i.e.,
\begin{equation}
  \label{eq:subproblem}
  \MBn = \arg \min_{\MB \geq 0} f_n(\MB) \equiv
  \V{e}\Tra \left[ \MB \MPin - \MXn \Hada \log
    \left(\MB \MPin\right) \right] \V{e}.
\end{equation}
The updates for $\Vl$ and $\MAn$ come directly from $\MBn$. Note that
some care must be taken if an entire column of $\MBn$ is zero; if the
$r$th column is zero, then we can set $\lambda_r = 0$ and
$\MnC{B}{n}{r}$ to an arbitrary nonnegative vector that sums to one.
The full procedure is given in \Alg{outer}; this is a variant (because
of the handling of $\Vl$) of nonlinear Gauss-Seidel. We note that the
scaling and unscaling of the factor matrices is common in alternating algorithms,
though not always explicit in the algorithm statement. 
There are many variations of this basic device; for instance, in the
context of the LS version of NTF,
\cite[Algorithm 2]{FrHa08} collects the scaling information into an explicit
scaling vector that is ``amended'' after each inner iteration 

\begin{algorithm}[t]
  \caption{CP-APR Algorithm (Ideal Version)}
  \label{alg:outer}
  Let $\T{X}$ be a tensor of size $I_1 \times \dots \times I_N$.
  Let $\TM = \KTsmall{\Vl;\Mn{A}{1},\cdots,\Mn{A}{N}}$ be an
  initial guess for an $R$-component model such that $\TM \in
  \Omega(\zeta)$ for some $\zeta > 0$.
  \begin{algorithmic}[1]
    \Repeat
    \For{$n = 1, \ldots, N$}
    \State $\M{\Pi} \gets  \left( \Mn{A}{N} \Khat \cdots \Khat
      \Mn{A}{n+1} \Khat \Mn{A}{n-1} \Khat \cdots \Khat \Mn{A}{1} \right)\Tra$
    \State $\MB \gets  \arg \displaystyle\min_{\MB \geq 0}
    \V{e}\Tra \left[ \MB \MPi - \MXn \Hada \log \left( \MB \MPi \right)
    \right] \V{e}$
    \Comment subproblem
    \State $\Vl \leftarrow \V{e}\Tra \MB$
    \State $\MAn \leftarrow \MB \ML\Inv$			
    \EndFor
    \Until{convergence}
  \end{algorithmic}
\end{algorithm}

We defer the proof of convergence until \Sec{convergence}, but we
discuss how to check for convergence here.
First, we mention an assumption that is important to the theory and also arguably practical.
Let
\begin{equation}\label{eq:S}
  \mathcal{S}_i^{(n)} = \Set{j | (\MXn)_{ij} >0}
\end{equation}
denote the set of indices of columns for which the $i$th row of
$\MXn$ is non-zero.
If $N=3$, then $\Mz{x}{1}(i,:)$ corresponds to a vectorization of
the $i$th horizontal slice of $\TX$, $\Mz{x}{2}(i,:)$ to a
vectorization of the $i$th lateral slice, and $\Mz{x}{3}(i,:)$ to a
vectorization of the $i$th frontal slice.
More generally, we can think of vectorizing ``hyperslices'' with
respect to each mode.

\begin{assumption}
  \label{as:full_row_rank}
  The rows of the submatrix $\MPin(:,\mathcal{S}_i^{(n)})$
  (i.e., only the columns corresponding to nonzero rows in
  $\Mz{X}{n}$ are considered) are linearly independent
  for all $i=1,\dots,I_n$ and $n = 1,\dots,N$.
\end{assumption}

\As{full_row_rank} implies that $\lvert
\mathcal{S}_i^{(n)} \rvert \geq R$ for all $i$. Thus, we need to
observe at least $R \cdot \max_{n} I_n$ counts in the data
tensor $\T{X}$, and the counts need to be sufficiently distributed
across $\T{X}$. Consequently, the conditions appeal to our intuition that
there are concrete limits on how sparse the data tensor can be with
respect to how many parameters we wish to fit.
If, for example, we had $\Mz{X}{1}(i,:) = 0$, it
is clear that we can remove element $i$ from the first dimension entirely
since it contributes nothing. We are making a stronger requirement:
each element in each dimension must have at least $R$ nonzeros in its corresponding
hyperslice.

A potential problem is that \As{full_row_rank} depends on the current
iterate, which we cannot predict in advance. However, we observe that
if $\Vl > 0$ and the
factor matrices have random uniform [0,1] positive entries and $R
\leq \min_{n} \prod_{m\neq n} I_m$, then this condition is satisfied
with probability one\footnote{We can actually appeal to a weaker
assumption; if the entries are drawn from any distribution that is 
absolutely continuous with respect to the Lebesgue measure on [0,1] then
the condition is satisfied with probability one.}. This condition can
be checked as the iterates progress.

The matrix \begin{equation}
  \label{eq:Phi}
  \MPhin \equiv \left[\MXn \Divide \left(\MBn\MPin\right)\right] \MPinTra,
\end{equation}
with $\Divide$ denoting elementwise division, will come up repeatedly
in the remainder of the paper. For instance, we observe that
the partial derivative of $f$ with respect to $\MAn$ is
$\partial{f}/\partial{\MAn}  = (\M{E} - \MPhin) \ML$,
where $\M{E}$ is the matrix of all ones.
Consequently, the matrix $\MPhin$ plays a role in checking convergence
as follows.

\begin{theorem}
  \label{thm:kkt}
  If $\Vl > 0$ and $\TM = \KTsmall{\Vl;\Mn{A}{1},\dots,\Mn{A}{N}} \in
  \Omega(\zeta)$ for some $\zeta > 0$, then $\TM$
  is a Karush-Kuhn-Tucker (KKT) point of \Eqn{nlp} if and only if
  \begin{equation}\label{eq:kkt-conditions}
    \min \left( \MAn,  \M{E} - \MPhin \right) = 0
    \text{ for } n = 1,\dots,N.
  \end{equation}
\end{theorem}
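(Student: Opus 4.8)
The plan is to write out the Karush--Kuhn--Tucker system for the constrained program \Eqn{nlp} and reduce it algebraically to the stated complementarity condition, exploiting the hypotheses $\Vl>0$ and $\TM\in\Omega(\zeta)$. First I would observe that, because $\TM\in\Omega(\zeta)$, the objective $f$ is finite, so $\TE{m}{i}>0$ whenever $\TE{x}{i}>0$; this makes $f$ continuously differentiable at $\TM$ and makes every entry of each $\MPhin$ well defined. I would also note that all constraints defining $\Omega$ are affine---entrywise nonnegativity of the $\MAn$ and of $\Vl$, together with the column-sum equalities $\|\MC{a}{r}\|_1=1$---and that the box upper bound ``$\le 1$'' is redundant given nonnegativity and the sum constraint, so it carries no multiplier. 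Since the theorem only characterizes KKT points (not local minima), no constraint qualification is needed; I merely manipulate the KKT relations directly.

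Next I would introduce multipliers: $\M{U}^{(n)}\ge 0$ for the sign constraints on $\MAn$ (with complementarity $(\M{U}^{(n)})_{jr}(\MAn)_{jr}=0$), free multipliers $\gamma_r^{(n)}$ for the column-sum equalities, and $\nu_r\ge 0$ for $\lambda_r\ge 0$ (with $\nu_r\lambda_r=0$). Using the gradient recorded in the text, $\partial f/\partial\MAn=(\M{E}-\MPhin)\ML$, stationarity in $\MAn$ reads $(1-(\MPhin)_{jr})\lambda_r+\gamma_r^{(n)}=(\M{U}^{(n)})_{jr}\ge 0$ for every $j,r$. For the weight gradient I would compute $\partial f/\partial\lambda_r$ through any single mode $n$: since $b_{jr}^{(n)}=\lambda_r(\MAn)_{jr}$ and the remaining factors are column-stochastic (so $\MPin\V{e}=\V{e}$), a short chain-rule calculation gives $\partial f/\partial\lambda_r=1-\sum_j(\MAn)_{jr}(\MPhin)_{jr}$, after using $\|\MC{a}{r}\|_1=1$.

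Then I would push the reduction. Because $\Vl>0$, complementarity forces $\nu_r=0$, so $\lambda$-stationarity is exactly $\sum_j(\MAn)_{jr}(\MPhin)_{jr}=1$ for every $r$. On the support of column $r$ (entries with $(\MAn)_{jr}>0$), $\MAn$-complementarity gives $(\M{U}^{(n)})_{jr}=0$, whence $(\MPhin)_{jr}=1+\gamma_r^{(n)}/\lambda_r$, a value constant across the support. Substituting into $\sum_j(\MAn)_{jr}(\MPhin)_{jr}=1$ and again using $\|\MC{a}{r}\|_1=1$ collapses the sum to $1+\gamma_r^{(n)}/\lambda_r=1$, forcing $\gamma_r^{(n)}=0$. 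With the equality multipliers eliminated, $\MAn$-stationarity and complementarity become precisely $(\MPhin)_{jr}\le 1$ for all $j,r$ and $(\MAn)_{jr}>0\Rightarrow(\MPhin)_{jr}=1$---that is, $\min(\MAn,\M{E}-\MPhin)=0$ entrywise. For the converse I would reverse these implications: given the complementarity condition, set $\gamma_r^{(n)}=0$, $\nu_r=0$, and $(\M{U}^{(n)})_{jr}=(1-(\MPhin)_{jr})\lambda_r\ge 0$, then verify every KKT relation, noting that $\partial f/\partial\lambda_r=0$ holds because $(\MPhin)_{jr}=1$ on the support.

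The crux---the one genuinely non-routine step---is showing that the equality-constraint multipliers $\gamma_r^{(n)}$ vanish; this is what pins the common support value of $\MPhin$ to exactly $1$ and yields the clean complementarity statement rather than merely ``$\MPhin$ is constant on each support.'' This step is where both hypotheses do real work: $\Vl>0$ lets us divide by $\lambda_r$ and discard $\nu_r$, while the column-stochasticity built into $\Omega$ supplies both $\MPin\V{e}=\V{e}$ (for the $\lambda$-gradient) and $\|\MC{a}{r}\|_1=1$ (to collapse the weighted sum). Everything else is bookkeeping of the KKT conditions for an affinely constrained differentiable program.
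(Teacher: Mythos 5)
Your proof is correct, and it takes a genuinely different route from the paper's. The paper exploits the scaling ambiguity: since $\Vl>0$, it absorbs $\Vl$ into one mode $m$, which replaces the constraints on $\Vl$ and $\Mn{A}{m}$ by the single constraint $\Mn{B}{m}\ge 0$; the $m$-block of the resulting KKT system then reads off $\min(\Mn{A}{m}\ML,\M{E}-\Mn{\Phi}{m})=0$ directly, and the condition for every mode follows because $m$ is arbitrary. The stochasticity multipliers for the other modes never need to be analyzed in the forward direction (they are only exhibited in the converse, where the paper sets $\Vn{\eta}{n}=0$). You instead keep the original parametrization of \Eqn{nlp}, retain all multipliers, and prove that the equality multipliers $\gamma_r^{(n)}$ vanish by combining $\lambda$-stationarity (which $\Vl>0$ activates, since $\nu_r=0$) with complementarity on the support of each column and the normalization $\|\MC{a}{r}\|_1=1$. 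Your computation $\partial f/\partial\lambda_r = 1-\sum_j(\MAn)_{jr}(\MPhin)_{jr}$ checks out, and the collapse $1+\gamma_r^{(n)}/\lambda_r=1$ is the right mechanism; it is in fact the same information the paper encodes implicitly when stationarity in $\Mn{B}{m}=\Mn{A}{m}\ML$ simultaneously enforces stationarity in $\Mn{A}{m}$ and in $\Vl$. What your version buys is self-containedness: the paper's argument tacitly assumes that the KKT systems of the original problem and of each reparametrized (absorbed) problem characterize the same points, a step it does not justify, whereas you never leave the original formulation. What it costs is the extra bookkeeping of the $\lambda$-gradient and the support argument. Both hypotheses are used in the same places in both proofs, and your observation that no constraint qualification is needed (the theorem is a statement about solvability of the KKT system, not about optimality) is correct.
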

\begin{proof}
  Since $\Vl > 0$, we can assume that $\Vl$ has been absorbed into
  $\Mn{A}{m}$ for some $m$. Thus, we can replace the constraints $\Vl \in
  \Omega_{\lambda}$ and $\Mn{A}{m} \in \Omega_n$ with $\Mn{B}{m} \geq
  0$.   In this case, the partial derivatives are
  \begin{equation}
    \FD{f}{\Mn{B}{m}}  = \M{E} - \Mn{\Phi}{m} \qtext{and}
    \FD{f}{\MAn}  = \left(\M{E} - \MPhin\right) \ML  \text{ for } n
    \neq m.
  \end{equation}
  Since $\TM \in \Omega(\zeta)$ for some $\zeta > 0$, we know that not
  all elements of $\TM$ are zero; thus, the set of active constraints
  are linearly independent.   The following conditions define a KKT
  point \cite{NoWr99}:
  \begin{equation}\label{eq:kkt-full-conditions}
    \begin{gathered}
       \M{E} - \Mn{\Phi}{m} - \Mn{\Upsilon}{m} = 0, \\
       (\M{E} - \MPhin) \ML - \Mn{\Upsilon}{n} -
       \V{e}(\Vn{\eta}{n})\Tra = 0, \;
       \V{e}\Tra\MAn = 1
       \qtext{for} n \neq m \\
       \MAn \geq 0, \;
       \Mn{\Upsilon}{n} \geq 0, \;
       \Mn{\Upsilon}{n} \Hada \MAn = 0
       \qtext{for} n \neq m \\
       \Mn{B}{m} \geq 0, \;
       \Mn{\Upsilon}{m} \geq 0, \;
       \Mn{\Upsilon}{m} \Hada \Mn{B}{m} = 0.
    \end{gathered}
  \end{equation}
  Here $\Mn{\Upsilon}{n}$ are the Lagrange multipliers for the
  nonnegativity constraints and $\Vn{\eta}{n}$ are the Lagrange
  multipliers for the stochasticity constraints.

  If $\TM= \KG{\Vl;\Mn{A}{1},\dots,\Mn{A}{N}}$ is a KKT point, then
  from \Eqn{kkt-full-conditions}, we have that $\Mn{\Upsilon}{m} =  \M{E} -
  \Mn{\Phi}{m} \geq 0$, $\Mn{B}{m} \geq 0$, and $\Mn{\Upsilon}{m} \Hada \Mn{B}{m} = 0$. Thus,
  $\min ( \Mn{A}{m} \ML,  \M{E} - \Mn{\Phi}{m} ) = 0$. Since
  $\Vl > 0$ and $m$ is arbitrary, \Eqn{kkt-conditions} follows immediately.

  If, on the other hand, \Eqn{kkt-conditions} is satisfied, choosing
  $\Mn{\Upsilon}{m} = \M{E} - \Mn{\Phi}{m}$, and
  $\Mn{\Upsilon}{n} = ( \M{E} - \Mn{\Phi}{n}) \ML$ and
  $\Vn{\eta}{n} = 0$ for $n \neq m$ satisfies the KKT conditions in
  \Eqn{kkt-full-conditions}. Hence, $\TM$ must be a KKT point.
\end{proof}

Observe that the condition $\Vl >0$ makes $\Vl$ moot in the KKT
conditions --- this reflects the scaling ambiguity that is inherent in
the model.

From \Thm{kkt} and because feasibility is always maintained, we can check for convergence by verifying
$| \min ( \MAn , \M{E} - \MPhin ) | \leq \tau$
for $n = 1,\dots,N$,
where $\tau > 0$ is some specified convergence tolerance.

\subsection{Convergence Theory for CP-APR}
\label{sec:convergence}

We require the strict convexity of $f$ in each of the block coordinates. This is ensured under \As{full_row_rank}.

\begin{lemma}[Strict convexity of subproblem]
  \label{lem:strict-convexity}
  Let $f_n(\cdot)$ be the function $f$ restricted to the $n$th block as defined in \Eqn{subproblem}.
  If \As{full_row_rank} is satisfied, then $f_n(\MB)$
  is strictly convex over $\mathcal{B}_n = \{\MB \in [0, +\infty)^{I_n \times R} : \MB\Mn{\Pi}{n} \neq \M{0}\}$.
\end{lemma}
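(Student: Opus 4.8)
The plan is to exploit that the objective in \Eqn{subproblem} separates completely over the rows of $\MB$. Writing $\boldsymbol{b}_i \in \Real^R$ for (the transpose of) the $i$th row of $\MB$ and $\boldsymbol{\pi}_j$ for the $j$th column of $\MPin$, the $(i,j)$ entry of $\MB\MPin$ equals $\boldsymbol{b}_i\Tra\boldsymbol{\pi}_j$ and depends on $\MB$ only through row $i$. Hence
\[
  f_n(\MB) = \sum_{i=1}^{I_n} g_i(\boldsymbol{b}_i), \qquad
  g_i(\boldsymbol{b}) = \sum_{j} \left[ \boldsymbol{b}\Tra\boldsymbol{\pi}_j - (\MXn)_{ij}\log\!\left(\boldsymbol{b}\Tra\boldsymbol{\pi}_j\right)\right].
\]
Because the summands act on disjoint blocks of variables, it suffices to show each $g_i$ is strictly convex: a finite sum of functions of disjoint variables, each strictly convex, is strictly convex jointly. (That $\mathcal{B}_n$ is itself convex also follows from \As{full_row_rank}: no row of $\MPin$ can be identically zero, so $\MB\MPin=\M{0}$ with $\MB\ge\M{0}$ forces $\MB=\M{0}$, and thus $\mathcal{B}_n$ is the nonnegative orthant with the origin removed, which is convex.)

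For a fixed $i$, each summand is convex on the region where $\boldsymbol{b}\Tra\boldsymbol{\pi}_j>0$: the term $\boldsymbol{b}\Tra\boldsymbol{\pi}_j$ is linear, and $-\log$ composed with the affine form $\boldsymbol{b}\mapsto\boldsymbol{b}\Tra\boldsymbol{\pi}_j$ and scaled by $(\MXn)_{ij}\ge0$ is convex. To upgrade to strict convexity I would compute the Hessian,
\[
  \nabla^2 g_i(\boldsymbol{b}) = \sum_{j\in\mathcal{S}_i^{(n)}} (\MXn)_{ij}\,
  \frac{\boldsymbol{\pi}_j\boldsymbol{\pi}_j\Tra}{\left(\boldsymbol{b}\Tra\boldsymbol{\pi}_j\right)^2},
\]
where the index set collapses to $\mathcal{S}_i^{(n)}$ from \Eqn{S} since $(\MXn)_{ij}=0$ annihilates all other terms. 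For any direction $\boldsymbol{d}$, the quadratic form $\boldsymbol{d}\Tra\nabla^2 g_i(\boldsymbol{b})\,\boldsymbol{d}$ is a sum of nonnegative terms that vanishes only when $\boldsymbol{\pi}_j\Tra\boldsymbol{d}=0$ for every $j\in\mathcal{S}_i^{(n)}$. This is exactly where \As{full_row_rank} enters: linear independence of the rows of $\MPin(:,\mathcal{S}_i^{(n)})$ means this $R\times|\mathcal{S}_i^{(n)}|$ submatrix has full row rank, equivalently its columns $\{\boldsymbol{\pi}_j : j\in\mathcal{S}_i^{(n)}\}$ span $\Real^R$; hence $\boldsymbol{d}$ orthogonal to all of them forces $\boldsymbol{d}=\boldsymbol{0}$, so the Hessian is positive definite and $g_i$ is strictly convex.

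I expect the main obstacle to be that $g_i$ is twice differentiable only where $\boldsymbol{b}\Tra\boldsymbol{\pi}_j>0$ for all $j\in\mathcal{S}_i^{(n)}$, while elsewhere on $\mathcal{B}_n$ the function takes the value $+\infty$. The safest remedy is to run the strict-convexity comparison directly along a segment rather than relying on smoothness everywhere: given distinct $\MB_1,\MB_2\in\mathcal{B}_n$, some row $i_0$ differs, and because $\{\boldsymbol{\pi}_j\}_{j\in\mathcal{S}_{i_0}^{(n)}}$ spans $\Real^R$ there is $j^\star\in\mathcal{S}_{i_0}^{(n)}$ with $\boldsymbol{\pi}_{j^\star}\Tra(\boldsymbol{b}_{i_0}^{(1)}-\boldsymbol{b}_{i_0}^{(2)})\neq0$; the term $-(\MXn)_{i_0 j^\star}\log(\cdot)$ is then strictly convex along the segment while every other term is convex, which produces the strict inequality. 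One must also verify the logarithms stay finite on the open segment: whenever one endpoint is finite, $\boldsymbol{b}\Tra\boldsymbol{\pi}_j$ is a strictly positive convex combination at every interior point for each $(i,j)$ with $(\MXn)_{ij}>0$, so the interpolated value is finite and the comparison is genuinely strict. The only degenerate case (both endpoints $+\infty$) lies outside the effective domain of $f_n$, and I would state the result on that effective domain. The remaining work is routine bookkeeping of these infinities together with the translation of \As{full_row_rank} into the spanning condition used above.
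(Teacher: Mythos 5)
Your proposal is correct and, after the Hessian detour, lands on essentially the same argument as the paper: decompose over rows (the paper transposes to columns of $\M{C}=\MB\Tra$), discard the affine part, and use strict concavity of $\log$ along the segment together with \As{full_row_rank} to produce one index pair $(i_0,j^\star)$ with $(\MXn)_{i_0 j^\star}>0$ and a differing affine value. Your extra care about the effective domain and the $+\infty$ values is a genuine (if minor) tightening of the paper's proof, which glosses over finiteness entirely.
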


\begin{proof}
  In the proof, we drop the $n$'s for convenience.
  First note that $\mathcal{B}$ is convex.
  Let $\M{C} = \MB\Tra$. We can rewrite \Eqn{subproblem}
  as %
  $\min f(\M{C}\Tra) =
  \sum_{ij}  \MC{C}{i}\Tra\MC{\pi}{j} - x_{ij}
  \log ( \MC{C}{i}\Tra\MC{\pi}{j} )$ subject to $\M{C} \geq 0$.
  Hence,
  it is sufficient to show that the function
  $\hat f(\M{C}) = - \sum_{ij} x_{ij} \log (\V{c}_{i}\Tra\V{\pi}_{j})$
  is strictly convex over the convex set $\mathcal{C} = \{\M{C} \in
  [0, +\infty)^{R \times I_n} : \M{C}\Tra\MPi \neq \M{0}\}$.
  Fix $\Mbar{C}, \Mhat{C} \in \mathcal{C}$ such
  that $\Mbar{C} \neq \Mhat{C}$.
  Since the inner product is affine and $\log$ is a strictly concave
  function, we need only show that there exists some $i$ and $j$ such
  that $x_{ij} \neq 0$ and $\MhatC{c}{i}\Tra \MC{\pi}{j} \neq
  \MbarC{C}{i}\Tra \MC{\pi}{j}$.  We know at least one column must
  differ since $\Mbar{C} \neq \Mhat{C}$; let $i$ correspond to that
  column and define $\V{d} = \MhatC{c}{i} - \MbarC{C}{i} \neq 0$. By
  \As{full_row_rank}, we know that $\MPi(:,S_i)$ has full row
  rank. Thus, there exists a column $j$ of $\MPi$ such that $x_{ij}
  \neq 0$ and $\V{d}\Tra\MC{\pi}{j} \neq 0$.  Hence, the claim.
\end{proof}

Here we state our main convergence result. Although this result
assumes that the subproblems can be solved exactly (which is not the
case in practice), it gives some idea as to the convergence behavior
of the method. We follow the reasoning of the proof of convergence of
nonlinear Gauss-Seidel \cite[Proposition 3.9]{BeTs89}, adapted here
for the way that $\Vl$ is handled. 

\begin{theorem}[Convergence of CP-APR]
  \label{thm:cp-apr-convergence}
  Suppose that $f(\TM)$ is strictly convex with respect to each block
  component and that it is minimized exactly for each block component
  subproblem of CP-APR. Let $\TM\It{*}$ be a limit point of the sequence 
  $\{\TM\It{k}\}$ such that $\Vl\It{*} > 0$. 
  Then $\TM\It{*}$ is a KKT
  point of \Eqn{nlp}.
\end{theorem}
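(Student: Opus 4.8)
The plan is to follow the standard convergence proof for block nonlinear Gauss--Seidel, namely \cite[Proposition~3.9]{BeTs89}, using the two structural results already proved in this section as the exact hypotheses that argument requires: \Lem{OmegaZeta} supplies compactness of the feasible set $\Omega(\zeta)$, and \Lem{strict-convexity} supplies strict convexity, hence uniqueness, of each block minimizer. I would first record monotonicity. Each inner step of \Alg{outer} minimizes $f$ exactly over its block while keeping the overall tensor feasible (the $\Vl$ rescaling does not change $\TM$), so $\{f(\TM\It{k})\}$ is nonincreasing; since $\TM\It{0}\in\Omega(\zeta)$ with $\zeta=f(\TM\It{0})$, every iterate remains in the compact set $\Omega(\zeta)$, on which $f$ is finite and continuous. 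Hence $f(\TM\It{k})$ is bounded below, converges, and the consecutive increments $f(\TM\It{k})-f(\TM\It{k+1})$ tend to zero.

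The core of the argument is to show that along the subsequence defining $\TM\It{*}$ the successive block iterates coalesce, i.e.\ the difference between an incoming and outgoing block value tends to zero. I would extract a subsequence with $\TM\It{k_j}\to\TM\It{*}$ and, using compactness, refine it so that every intermediate iterate (having updated blocks $1,\dots,\ell$) also converges. Then I would run the classical contradiction: if the first-block increment did not vanish, normalize the update direction, pass to a limiting direction, and exploit that each outgoing block value is the \emph{exact} block minimizer together with the fact that the $f$-increments vanish. This forces $f$ to be constant along a nontrivial segment in the first block emanating from the limit, contradicting the strict convexity guaranteed by \Lem{strict-convexity}, which makes the block minimizer unique. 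Repeating mode by mode shows that at $\TM\It{*}$ each block is already a block minimizer with the remaining blocks fixed at their limit values.

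To finish, I would convert block optimality into the KKT characterization. Because $\Vl\It{*}>0$, near $\TM\It{*}$ I can absorb $\Vl$ into one factor and work with the $\MB$-representation, so that each block subproblem is the convex program in \Eqn{subproblem}; writing its first-order optimality conditions (exactly the per-mode computation that precedes \Thm{kkt}) gives $\min(\MAn,\M{E}-\MPhin)=\M{0}$ at $\TM\It{*}$ for every $n$. Collecting these conditions for $n=1,\dots,N$ and invoking \Thm{kkt} with $\Vl\It{*}>0$ then yields that $\TM\It{*}$ is a KKT point of \Eqn{nlp}.

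The step I expect to be the main obstacle is this coalescing argument, for two reasons. First, the $\Vl$-rescaling means the iterate sequence naturally lives in tensor space, so one must fix a consistent normalized representation in a neighborhood of $\TM\It{*}$; this is possible precisely because $\Vl\It{*}>0$, which is where that hypothesis is genuinely used. Second, the boundary and sparsity make $f$ nonsmooth and infinite off $\Omega(\zeta)$, so all limiting directional-derivative manipulations must be confined to the compact region where $f$ is finite, and one must verify that $\MBn\MPin>0$ on the support of $\TX$ so that $\MPhin$ and the block gradients remain well defined in the limit.
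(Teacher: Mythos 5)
Your proposal is correct and follows essentially the same route as the paper: both use the nonlinear Gauss--Seidel argument of \cite[Proposition~3.9]{BeTs89}, with \Lem{OmegaZeta} providing compactness, the normalized-direction contradiction against strict convexity (\Lem{strict-convexity}) establishing that successive block iterates coalesce, and \Thm{kkt} together with $\Vl\It{*}>0$ converting block optimality into the KKT characterization. The paper handles the $\Vl$-rescaling subtlety you flag by introducing intermediate iterates with $\Vl$ absorbed into the active factor, exactly as you anticipate.
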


\begin{proof}
  Let $\TM\It{k} = \KG{\Vl\It{k}, \Mn{A}{1}\It{k}, \dots,
    \Mn{A}{N}\It{k}}$ be the $k$th iterate produced by the
  \emph{outer} iterations of \Alg{outer}.
  Define $\T{Z}\Itn{n}{k}$ to be
  the $n$th iterate in the inner loop of outer iteration $k$ with the
  $\Vl$-vector absorbed into the $n$th factor, i.e.,
  \begin{displaymath}
    \T{Z}\Itn{n}{k} = \KG{\Mn{A}{1}\It{k+1}, \dots, \Mn{A}{n-1}\It{k+1},
      \Mn{B}{n}\It{k+1}, \Mn{A}{n+1}\It{k}, \dots, \Mn{A}{N}\It{k}},
  \end{displaymath}
  where $\MBn\It{k+1}$ is the solution to the $n$th subproblem at
  iteration $k$. This defines $\MAn\It{k+1}$ to be the column-normalized
  version of $\MBn\It{k+1}$, i.e.,
  $\MAn\It{k+1} = \MBn\It{k+1} (\diag( \MBn\It{k+1} \V{e}))\Inv$.
  Taking advantage of the scaling ambiguity to shift the weights between factors yields
  \begin{align*}
    f(\T{Z}\Itn{n}{k}) 
    &= f(\KG{\Mn{A}{1}\It{k+1}, \dots, \Mn{A}{n-1}\It{k+1},
      \Mn{A}{n}\It{k+1} \diag( \MBn\It{k+1} \V{e}), \Mn{A}{n+1}\It{k}, \dots, \Mn{A}{N}\It{k}}), \\
    &= f(\KG{\Mn{A}{1}\It{k+1}, \dots, \Mn{A}{n-1}\It{k+1},
      \Mn{A}{n}\It{k+1} , \Mn{A}{n+1}\It{k} \diag( \MBn\It{k+1} \V{e}), \dots, \Mn{A}{N}\It{k}}), \\
    & \geq f(\KG{\Mn{A}{1}\It{k+1}, \dots, \Mn{A}{n-1}\It{k+1},
      \Mn{A}{n}\It{k+1} , \Mn{B}{n+1}\It{k+1}, \dots, \Mn{A}{N}\It{k}}) = f(\T{Z}\Itn{n+1}{k}).
  \end{align*}
  Observe that
  \begin{inlinemath}
    \T{Z}\Itn{N}{k} = \KG{\Mn{A}{1}\It{k+1}, \dots, \Mn{A}{N-1}\It{k+1},
      \Mn{A}{N}\It{k+1} \diag(\Vl\It{k+1})},
  \end{inlinemath}
  so there is a correspondence between $\T{Z}\Itn{N}{k}$ and
  $\TM\It{k+1}$ such that $f(\T{Z}\Itn{N}{k}) = f(\TM\It{k+1})$.
  For convenience, we define
  \begin{inlinemath}
    \T{Z}\Itn{0}{k}  = \KG{\Mn{A}{1}\It{k}\diag(\Vl\It{k}),  \Mn{A}{2}\It{k},
      \dots, \Mn{A}{N}\It{k}}.
  \end{inlinemath}
  Since we assume the subproblem is solved exactly at each iteration,
  we have
  \begin{equation}\label{eq:BT3.14}
    f(\TM\It{k}) \geq
    f(\T{Z}\Itn{1}{k}) \geq
    f(\T{Z}\Itn{2}{k}) \geq
    \cdots
    f(\T{Z}\Itn{N-1}{k}) \geq
    f(\TM\It{k+1})
    \text{ for all } k.
  \end{equation}

  Recall that $\Omega(\zeta)$ is compact by \Lem{OmegaZeta}.  Since
  the sequence $\{\TM\It{k}\}$ is contained in the set
  $\Omega(\zeta)$, it must have a convergent subsequence. We let
  $\{\kl\}$ denote the indices of that convergent subsequence and
  $\TM\It{*} = \KG{\Vl\It{*}, \Mn{A}{1}\It{*}, \dots,
    \Mn{A}{N}\It{*}}$ denote its limit point. By continuity of $f$,
  \begin{inlinemath}
    f(\TM\It{\kl}) \rightarrow f(\TM\It{*}).
  \end{inlinemath}

  We first show that $\| \Mn{A}{1}\It{\kl + 1} - \Mn{A}{1}\It{\kl} \|
  \rightarrow 0$. Assume the contrary, i.e., that it does not converge to
  zero. Let $\gamma\It{\kl} = \| \T{Z}\Itn{1}{\kl} - \T{Z}\Itn{0}{\kl}
  \|$. By possibly restricting to a subsequence of $\{\kl\}$, we may
  assume there exists some $\gamma\It{0} > 0$ such that $\gamma(\kl) \geq
  \gamma\It{0}$ for all $\ell$.
  Let $\Mn{S}{1}\It{\kl} = (\T{Z}\Itn{1}{\kl} - \T{Z}\Itn{0}{\kl}) /
  \gamma\It{\kl}$; then $\T{Z}\Itn{1}{\kl} =  \T{Z}\Itn{0}{\kl} +
  \gamma\It{\kl} \Mn{S}{1}\It{\kl}$, $\|\Mn{S}{1}\It{\kl}\| = 1$, and
  $\Mn{S}{1}\It{\kl}$ differs from zero only along the first block component.
  Notice that $\{\Mn{S}{1}\It{\kl}\}$ belong to a compact set and
  therefore has a limit point $\Mn{S}{1}\It{*}$. By restricting to a
  further subsequence of $\{\kl\}$, we assume that $\Mn{S}{1}\It{\kl}
  \rightarrow \Mn{S}{1}\It{*}$

  Let us fix some $\epsilon \in [0,1]$. Notice that $0 \leq \epsilon
  \gamma\It{0} \leq \gamma\It{\kl}$. Therefore, $\T{Z}\Itn{0}{\kl} +
  \epsilon\gamma\It{0} \Mn{S}{1}\It{\kl}$ lies on the line segment joining
  $\T{Z}\Itn{0}{\kl}$ and $\T{Z}\Itn{0}{\kl} +
  \gamma\It{\kl} \Mn{S}{1}\It{\kl} = \T{Z}\Itn{1}{\kl}$ and belongs to
  $\Omega(\zeta)$ because $\Omega(\zeta)$ is
  convex. Using the convexity of $f$ w.r.t.\@ the first
  block component and the fact that $\T{Z}\Itn{1}{\kl}$ minimizes
  $f$ over all $\T{Z}$ that differ from $\T{Z}\Itn{1}{\kl}$ in the
  first block component, we obtain
  \begin{displaymath}
  f(\T{Z}\Itn{1}{\kl})
  = f(\T{Z}\Itn{0}{\kl} + \gamma\It{\kl} \Mn{S}{1}\It{\kl})
  \leq f(\T{Z}\Itn{0}{\kl} + \epsilon\gamma\It{0} \Mn{S}{1}\It{\kl})
  \leq f(\T{Z}\Itn{0}{\kl}).
  \end{displaymath}
  Since $f(\T{Z}\Itn{0}{\kl}) = f(\TM\It{\kl}) \rightarrow f(\TM\It{*})$, equation
  \Eqn{BT3.14} shows that $f(\T{Z}\Itn{1}{\kl})$ also converges to
  $f(\TM\It{*})$. Taking limits as $\ell$ tends to infinity, we obtain
  \begin{displaymath}
    f(\TM\It{*})
    \leq f(\T{Z}\Itn{0}{*} + \epsilon\gamma\It{0} \Mn{S}{1}\It{*})
    \leq f(\TM\It{*}),
  \end{displaymath}
  where $\T{Z}\Itn{0}{*}$ is just $\T{M}\It{*}$ with $\Vl\It{*}$ absorbed
  into the first component.
  We conclude that
  \begin{inlinemath}
    f(\TM\It{*})
    =  f(\T{Z}\Itn{0}{*} + \epsilon\gamma\It{0} \Mn{S}{1}\It{*})
  \end{inlinemath}
  for every $\epsilon \in [0,1]$. Since $\gamma\It{0}\Mn{S}{1}\It{*}
  \neq 0$, this contradicts the strict convexity of $f$ as a function
  of the first block component. This contradiction establishes that
  $\| \Mn{A}{1}\It{\kl + 1} - \Mn{A}{1}\It{\kl} \|
  \rightarrow 0$. In particular, $\T{Z}\Itn{1}{\kl}$ converges to
  $\T{Z}\Itn{0}{*}$.

  By definition of $\T{Z}\Itn{1}{\kl}$ and the assumption that each
  subproblem is solved exactly, we have
  \begin{displaymath}
    f(\T{Z}\Itn{1}{\kl})
    \leq
    f(\KG{ \MB, \Mn{A}{2}\It{\kl}, \dots, \Mn{A}{N}\It{\kl}})
    \text{ for all } \MB \geq 0.
  \end{displaymath}
  Taking limits as $\ell \rightarrow \infty$, we obtain
  \begin{displaymath}
    f(\T{M}\It{*}) \leq
    f(\KG{ \MB, \Mn{A}{2}\It{*}, \dots, \Mn{A}{N}\It{*}})
    \text{ for all } \MB \geq 0.
  \end{displaymath}
  In other words, $\Mn{B}{1}\It{*} = \Mn{A}{1}\It{*} \diag( \Vl\It{*}
  )$ is the minimizer of $f$ with respect to the first block
  components with the remaining components are fixed at
  $\Mn{A}{2}\It{*}$ through $\Mn{A}{N}\It{*}$.
  From the KKT conditions \cite{NoWr99},  we have that
  \begin{displaymath}
    \Mn{B}{1}\It{*} \geq 0,
    \quad
    \FD{f}{\Mn{B}{1}}(\Mn{B}{1}\It{*}) \geq 0,
    \quad
    \Mn{B}{1}\It{*} \Hada \FD{f}{\Mn{B}{1}}(\Mn{B}{1}\It{*}) = 0.
  \end{displaymath}
  In turn, since $\Vl\It{*} > 0$, we have
  $\min ( \Mn{A}{1}\It{*}, \M{E} - \Mn{\Phi}{1}\It{*} ) = 0$.

  Repeating the previous argument shows that
  $\| \Mn{A}{2}\It{\kl + 1} - \Mn{A}{2}\It{\kl} \|
  \rightarrow 0$ and that
  $\min ( \Mn{A}{2}\It{*}, \M{E} - \Mn{\Phi}{2}\It{*} ) = 0$.
  Continuing inductively, 
  ${\min ( \Mn{A}{n}\It{*}, {\M{E}-\Mn{\Phi}{n}\It{*}} ) = 0}$
  for $n = 1,\dots,N$.
  Thus, by \Thm{kkt},  $\T{M}\It{*}$ is a KKT point of $f(\TM)$.
\end{proof}

Before proceeding to the discussion solving the subproblem, we point out
that remarkably very little is assumed about the objective function $f$ in \Thm{cp-apr-convergence}. The proof
required that $f$ is differentiable, strictly convex in each of its
block components, and there is a $\xi > 0$ such that the level set $\Omega(\xi)$ is compact.
The upshot is that \Thm{cp-apr-convergence} applies equally well to
other choices of $f$ corresponding to other members in the family of  
beta distributions that are convex, namely the divergences that
correspond to $\beta \in [1, 2]$~\cite{FeId11}. 
In fact, 
it was also observed in \cite{FrHa08} that  ``rescaling does not interfere with the
convergence of the Gauss--Seidel iterations'' (in the context of 
the LS formulation of NTF).

\section{Solving the CP-APR Subproblem via Majorization-Minimization}
\label{sec:subproblem}

The basic idea of a majorization-minimization (MM) algorithm is to
convert a hard optimization problem (e.g., non-convex and/or
non-differentiable) into a series of simpler ones (e.g., smooth
convex) that are easy to minimize and that majorize the original
function, as follows.

\begin{definition}
  Let $f$ and $g$ be real-valued functions on $\mathbb{R}^n$ and $\mathbb{R}^n\times\mathbb{R}^n$, respectively.  We say
  that $g$ {\em majorizes} $f$ at $\V{x}\in \Real^n$ if $g(\V{y}, \V{x})
  \geq f(\V{y})$ for all $\V{y} \in \Real^n$ and $g(\V{x}, \V{x}) =
  f(\V{x})$.
\end{definition}

If $f(\Vx)$ is the function to be optimized and $g(\cdot,\Vx)$ majorizes
$f$ at $\Vx$, the basic MM iteration is
\begin{inlinemath}
  \Vx\It{k+1} = \arg \min_{\V{y}} g(\V{y}, \Vx\It{k}).
\end{inlinemath}
It is easy to
see that such iterates always take non-increasing steps with respect to
$f$ since
\begin{inlinemath}
  f(\Vx\It{k+1}) \leq g(\Vx\It{k+1}, \Vx\It{k}) \leq g(\Vx\It{k}, \Vx\It{k}) = f(\Vx\It{k}),
\end{inlinemath}
where $\Vx\It{k}$ is the current iterate and $\Vx\It{k+1}$ is the optimum computed
at that iterate.

Consider the $n$th subproblem in \Eqn{subproblem}. Here we drop the
$n$'s for convenience %
so that \Eqn{subproblem} reduces to
\begin{equation}\label{eq:subproblem-simple}
  \min_{\MB \geq 0} f(\MB) \equiv 
  \V{e}\Tra \left [\MB\M{\Pi} - \M{X} \Hada \log(\MB \M{\Pi}) \right ]\V{e}.
\end{equation}
Recall that $\M{X}$ is the nonnegative data tensor reshaped to a
matrix of size $I \times J$, $\MPi$ is a nonnegative matrix of size $R
\times J$ with rows that sum to 1, and $\MB$ is a nonnegative matrix
of size $I \times R$. For clarity in the ensuing discussion, we also restate \As{full_row_rank} in terms of the local
variables for this section as follows:
\begin{assumption}
  \label{as:full_row_rank_mm}
  The rows of the submatrix $\MPi\left(:,\Set{j | \M{X}_{ij} > 0}\right)$
  (i.e., only the columns corresponding to nonzero rows in
  $\M{X}$ are considered) are linearly independent
  for all $i=1,\dots,I$.
\end{assumption}

According to \As{full_row_rank_mm}, for every $i$ there is at
least one $j$ such that $x_{ij} > 0$. Thus, we can assume that we have $\Mbar{B}
\geq 0$ such that $f(\Mbar{B})$ is finite.  We now introduce the majorization used in our subproblem solver.
This majorization is also a special case of the one
derived in \cite{FeId11} when $\beta = 1$ and
has a long history in image reconstruction that predates its 
use in NMF \cite{Lucy1974, ShVa82, LaCa84}.
The objective $f$ is majorized at $\Mbar{B}$ by the function
\begin{equation}
   \label{eq:majorization}
  g(\M{B}, \Mbar{B}) = \sum_{rij}
  \left [ {b}_{ir} {\pi}_{rj} -
  \alpha_{rij} x_{ij} \log\left( \frac{b_{ir} \pi_{rj}}{\alpha_{rij}}
    \right) \right]
  \qtext{where}
  \alpha_{rij} = \frac{\bar b_{ir}\pi_{rj}}{\sum_r \bar b_{ir} \pi_{rj}}.  
\end{equation}
The proof of this fact is straightforward and thus relegated to
\App{proof-mm}.  The advantage of this majorization is that the problem is now
completely separable in terms of $b_{ir}$, i.e., the individual
entries of $\M{B}$. Moreover, $g(\cdot, \Mbar{B})$ has a unique
global minimum with an analytic expression, given by $\MB \Hada \MPhi$,
where $\MPhi$ is as defined in \Eqn{Phi} and depends on $\MB$.
A proof is provided in \App{proof-mm-unique-global-min}. The MM algorithm iterations are then defined by
\begin{equation}
  \label{eq:mm-iterate}
  \MB\It{k+1} = \psi(\MB\It{k}) \equiv \MB\It{k} \Hada \MPhi(\MB\It{k}),
  \qtext{where}
  \MPhi(\MB\It{k}) = 
  \left[ \M{X} \Divide \left( \MB\It{k} \MPi \right) \right] \MPi
\end{equation}
and $\M{X}$ and $\MPi$ come from \Eqn{subproblem-simple}.
If $\MB\It{0} \geq 0$, clearly $\MB\It{k} \geq 0$ for all $k$. 
Observe that
$\nabla f(\MB) = \M{E} - \MPhi(\M{B})$. We discuss in \Sec{stopping} how to exploit this simple relationship to quickly compute
stopping rules for the algorithm. The MM algorithm to solve the Gauss-Seidel subproblem of Line 4 in \Alg{outer}
is given in \Alg{CPAPR}.

\begin{algorithm}[t]
  \caption{Subproblem Solver for \Alg{outer}}
  \label{alg:CPAPR}
  \begin{algorithmic}[1]
    \State $\MB \gets \MAn\ML$ \label{line:B}
    \Repeat \Comment{subproblem loop} \label{line:a}
    \State $\MPhi \leftarrow \left( \Mz{X}{n} \Divide (\MB \MPi )
    \right ) \MPi\Tra$ \label{line:Phi}
    \State $\MB \leftarrow \MB \Hada \MPhi$ \label{line:B-update}
    \Until{convergence} \label{line:b}
  \end{algorithmic}
\end{algorithm}

The monotonic decrease in objective function does not guarantee that the MM iterates will
converge to the desired global minimizer of the
subproblem. Nonetheless, the following theorem shows that, under mild
conditions on the starting point $\MB\It{0}$ (discussed further in \Sec{zeros}), the MM iterates
will converge to the unique global minimum of
\Eqn{subproblem-simple}. 
The proof follows the reasoning of the convergence proof of an algorithm for fitting a regularized Poisson regression problem given in \cite{La90} and is given in \App{subproblem-convergence}.

\begin{theorem}[Convergence of MM algorithm]
\label{thm:subproblem-convergence}
  Let $f$ be as defined in \Eqn{subproblem-simple} and assume
  \As{full_row_rank_mm} holds, let $\MB\It{0}$
  be a nonnegative matrix such that $f(\MB\It{0})$ is finite
  and $\left( \MB\It{0} \right)_{ir} > 0$ for all $(i,r)$ such that
  $(\MPhi(\MB\It{*}))_{ir} > 1$, and let the sequence $\{\MB\It{k}\}$
  be defined as in \Eqn{mm-iterate}.
  Then  $\{\MB\It{k}\}$ converges to the global minimizer of $f$.
\end{theorem}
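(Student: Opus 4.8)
The plan is to run the standard majorization--minimization global-convergence argument, specialized to the multiplicative map $\psi$ of \Eqn{mm-iterate}, and to let the strict convexity from \Lem{strict-convexity} collapse the stationary set to a single point. First I would assemble the structural facts that drive the proof. The majorization property gives monotone descent, $f(\MB\It{k+1}) \le g(\MB\It{k+1},\MB\It{k}) \le g(\MB\It{k},\MB\It{k}) = f(\MB\It{k})$, so $\{f(\MB\It{k})\}$ decreases to some limit $f^{*}$. A coercivity argument parallel to the one behind \Lem{OmegaZeta} (the linear term satisfies $\V{e}\Tra \MB\MPi\V{e} = \|\MB\|_1$ because the rows of $\MPi$ sum to one, so it dominates the logarithmic term) shows the sublevel set $\{\MB \ge 0 : f(\MB) \le f(\MB\It{0})\}$ is compact, and the whole sequence lives in it. The identity $\nabla f(\MB) = \M{E} - \MPhi(\MB)$ then shows that $\MB$ is a fixed point of $\psi$, i.e.\ $\MB = \MB \Hada \MPhi(\MB)$, exactly when $\MB \Hada (\M{E} - \MPhi(\MB)) = 0$, which is the complementarity half of the KKT system in \Thm{kkt}. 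Finally, under \As{full_row_rank_mm} each multiplier $(\MPhi)_{ir}$ is strictly positive, so the update has the \emph{persistence-of-zeros} property that $b_{ir}\It{k} = 0$ for some $k$ if and only if $b_{ir}\It{0} = 0$.

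Next I would pass to the limit. Because $\MB\It{k+1}$ exactly minimizes the strictly convex surrogate $g(\cdot,\MB\It{k})$, the per-step decrease can be bounded below by a Bregman-type divergence between $\MB\It{k}$ and $\MB\It{k+1}$; telescoping this against the lower bound on $f$ forces $\|\MB\It{k+1} - \MB\It{k}\| \to 0$. Compactness then yields a convergent subsequence with limit $\MB\It{*}$, at which $f$ is finite (so $\MB\It{*}\MPi$ has no zero entry where $\M{X}$ is nonzero and $\psi$ is continuous there). Using that $f(\MB\It{k})$ and $f(\MB\It{k+1})$ share the limit $f^{*}$ and that the surrogate minimizer is unique, I would conclude $\psi(\MB\It{*}) = \MB\It{*}$, so $\MB\It{*}$ satisfies the complementarity condition $\MB\It{*} \Hada (\M{E} - \MPhi(\MB\It{*})) = 0$.

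The crux is upgrading complementarity to the full KKT system $\min(\MB\It{*}, \M{E} - \MPhi(\MB\It{*})) = 0$, i.e.\ proving dual feasibility $(\MPhi(\MB\It{*}))_{ir} \le 1$ at every index with $b_{ir}\It{*} = 0$ --- equivalently, ruling out inadmissible zeros. Where $b_{ir}\It{*} > 0$, complementarity already gives $(\MPhi(\MB\It{*}))_{ir} = 1$. Where $b_{ir}\It{*} = 0$ I would split on the starting value: if $b_{ir}\It{0} = 0$, the contrapositive of the hypothesis on $\MB\It{0}$ gives $(\MPhi(\MB\It{*}))_{ir} \le 1$ directly; if $b_{ir}\It{0} > 0$, persistence of zeros gives $b_{ir}\It{k} > 0$ for all $k$, and were $(\MPhi(\MB\It{*}))_{ir} > 1$ the multiplier would exceed one throughout a neighborhood of $\MB\It{*}$, forcing $b_{ir}$ to increase there and contradicting $b_{ir}\It{k} \to 0$. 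Either way dual feasibility holds, so $\MB\It{*}$ is a KKT point; by \Lem{strict-convexity} the strictly convex subproblem has a unique KKT point, which is therefore its global minimizer. Since every subsequential limit must equal this unique point and the sequence is bounded, the whole sequence converges to the global minimizer.

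I expect the third paragraph to be the main obstacle, specifically the $b_{ir}\It{0} > 0$ branch: turning ``the multiplier exceeds one near $\MB\It{*}$'' into a genuine contradiction is delicate because a priori convergence is only along a subsequence, so I would lean on the asymptotic regularity $\|\MB\It{k+1}-\MB\It{k}\| \to 0$ to keep consecutive iterates near $\MB\It{*}$ while tracking the monotone growth of $b_{ir}\It{k}$. This boundary analysis --- excluding inadmissible zeros, which are exactly the non-KKT fixed points of $\psi$ that the nonstandard positivity hypothesis on $\MB\It{0}$ is engineered to eliminate --- is the only place where the precise form of the starting condition and \As{full_row_rank_mm} are both essential; the remaining steps are the routine MM descent and compactness bookkeeping.
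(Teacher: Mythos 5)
Your architecture is the standard MM one, and most of the individual pieces coincide with the paper's: monotone descent, compactness of the sublevel set, fixed points of $\psi$ as complementarity points, persistence of zeros, and the exclusion of inadmissible zeros via the positivity hypothesis on $\MB\It{0}$ (this last step is exactly the paper's \Lem{no_convergence_to_non_KKT}). But there is a genuine gap precisely where you flagged the difficulty, and your proposed patch does not close it. Your dual-feasibility argument at an index with $b_{ir}\It{0}>0$ and $b_{ir}\It{*}=0$ needs $b_{ir}\It{k}\to 0$ along the \emph{whole} sequence while the iterates sit in a neighborhood of $\MB\It{*}$ on which $(\MPhi)_{ir}>1$. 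With only a subsequential limit $\MB\It{\kl}\to\MB\It{*}$, all you can extract is $b_{ir}\It{\kl+1}\geq(1+\delta)\,b_{ir}\It{\kl}$ for large $\ell$, which is perfectly consistent with $b_{ir}\It{\kl}\to 0$: the coordinate can decay during excursions away from $\MB\It{*}$. Asymptotic regularity $\|\MB\It{k+1}-\MB\It{k}\|\to 0$ only keeps finitely many consecutive iterates inside the neighborhood after each visit; it does not control $b_{ir}$ between visits, so no contradiction results. Meanwhile your route to full-sequence convergence is ``every subsequential limit is the unique KKT point,'' which requires running that very dual-feasibility step at each subsequential limit. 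The two halves of your argument are circularly dependent.

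The paper breaks this circle with an ingredient your proposal lacks: it proves convergence of the whole sequence \emph{before} and independently of the KKT upgrade (\Lem{MM_iterates_converge}). The key observation is that the fixed-point set of $\psi$ is finite: a fixed point satisfies $\MB\Hada(\M{E}-\MPhi(\MB))=0$ and is therefore, by \Lem{strict-convexity}, the unique minimizer of $f$ over the face of the nonnegative orthant determined by its zero pattern, and there are only finitely many zero patterns. Finitely many limit points together with $\|\MB\It{k+1}-\MB\It{k}\|\to 0$ traps the tail of the sequence in a single neighborhood, giving convergence to one fixed point $\MB\It{*}$; only then is the inadmissible-zero argument applied to the genuinely convergent sequence, where ``$b_{ir}\It{k}$ is eventually strictly increasing yet tends to $0$'' is an actual contradiction. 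This ordering also resolves the mild self-referentiality of the hypothesis, which mentions $\MPhi(\MB\It{*})$ before a limit is known to exist. If you insert this finiteness-of-fixed-points step, the remainder of your outline --- including the coercivity computation $\V{e}\Tra\MB\MPi\V{e}=\|\MB\|_1$ and the final appeal to strict convexity to identify the KKT point with the global minimizer --- goes through essentially as in the paper.
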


Note that we make a modest but very useful generalization 
of existing results by allowing iterates to be on (or very close to) the boundary.
Prior convergence
results, including \cite{La90, FiSp06, ZaPe11} assume 
that all iterates are strictly positive. Though true in exact
arithmetic, in numerical computations it is not uncommon for some
iterates to become zero numerically. In \Sec{zeros}, we show how to
ensure the condition on $\MB\It{0}$ holds in practice.
\section{CP-APR Implementation Details}
\label{sec:implementation}
The previous algorithms omit many details and
numerical checks that are needed in any practical implementation.
Thus, \Alg{CPAPR-detailed} provides a detailed version that can be
directly implemented. A highlight  of this implementation is the
``inadmissible zero'' avoidance, which fixes a the problem of getting
stuck at a zero value with multiplicative updates.

\begin{algorithm}[t]
  \caption{Detailed CP-APR Algorithm}
  \label{alg:CPAPR-detailed}
  Let $\T{X}$ be a tensor of size $I_1 \times \dots \times I_N$.
  Let $\TM = \KG{\Vl;\Mn{A}{1},\cdots,\Mn{A}{N}}$ be an
  initial guess for an $R$-component model such that $\TM \in
  \Omega(\zeta)$ for some $\zeta > 0$. \\
  Choose the following parameters:
  \begin{itemize}
  \item $k_{\max}$ = Maximum number of outer iterations
  \item $\ell_{\max}$ = Maximum number of inner iterations (per outer iteration)
  \item $\tau$ = Convergence tolerance on KKT conditions (e.g., $10^{-4}$)
  \item $\kappa$ = Inadmissible zero avoidance adjustment (e.g., $0.01$)
  \item $\kappa_{\rm tol}$ = Tolerance for identifying a potential
    inadmissible zero (e.g., $10^{-10}$)
  \item $\epsilon$ = Minimum divisor to prevent divide-by-zero (e.g.,  $10^{-10}$)
  \end{itemize}
  \begin{algorithmic}[1]
    \For{$k = 1,2,\dots,k_{\max}$}
    \State \texttt{isConverged} $\gets$ true
    \For{$n = 1, \ldots, N$}
    \State \label{line:scooch-a}
    $\M{S}(i,r) \gets
    \begin{cases}
      \kappa, & \text{if } k > 1,
      \MAn(i,r) < \kappa_{\rm tol}, \text{ and } \MPhin(i,r) > 1,  \\
      0, & \text{otherwise}
    \end{cases}
    $
    \State \label{line:scooch-b} 
    $\MB \gets (\MAn  + \M{S}) \ML$
    \State $\M{\Pi} \gets  \left( \Mn{A}{N} \Khat \cdots \Khat
      \Mn{A}{n+1} \Khat \Mn{A}{n-1} \Khat \cdots \Khat \Mn{A}{1} \right)\Tra$ \label{line:update_pi}
    \For{$\ell=1,2,\dots,\ell_{\max}$} \Comment{subproblem loop} 
    \State $\MPhin \gets \left( \Mz{X}{n} \Divide (\max(\MB \MPi,\epsilon) )
    \right ) \MPi\Tra$ \label{line:multiplicative_update}
    \If{ $| \min ( \MB , \M{E} - \MPhin ) | < \tau$ } \label{line:tau}
    \State break
    \EndIf
    \State \texttt{isConverged} $\gets$ false
    \State $\MB \gets \MB \Hada \MPhin$
    \EndFor
    \State $\Vl \gets \V{e}\Tra \MB$
    \State $\MAn \gets \MB \ML\Inv$			
    \EndFor
    \If{ \texttt{isConverged} = true }
    \State break
    \EndIf
    \EndFor
  \end{algorithmic}
\end{algorithm}

\subsection{Lee-Seung is a special case of CP-APR}
If we only take one iteration of the subproblem loop (i.e., setting
$\ell_{\max} = 1$), then CP-APR is the Lee-Seung multiplicative
update algorithm for the KL divergence.  Thus, we can view the
Lee-Seung algorithm as a special case of our algorithm where we do not
solve the subproblems exactly; quite the contrary, we only take one
step towards the subproblem solution.

\subsection{Inadmissible Zero Avoidance}
\label{sec:zeros}
A well-known problem with multiplicative updates is that some
elements may get ``stuck'' at zero; see, e.g., \cite{GoZh05}. 
For example, if $\MnE{A}{n}{ir} =
0$, then the multiplicative updates %
will
never change it. In many cases, a zero entry may be the correct
answer, so we want to allow it. In other cases, though, the zero entry
may be incorrect in the sense that it does not satisfy the KKT
conditions, i.e., $\MnE{A}{n}{ir} = 0$ but
$1 - \MnE{\Phi}{n}{ir} < 0$.
We refer to these values as \emph{inadmissible zeros}.
We correct this problem before we enter into the multiplicative
update phase of the algorithm. 
In \AlgLines{CPAPR-detailed}{scooch-a}{scooch-b},
any inadmissible zeros (or
near-zeros) are ``scooched'' away from zero and into the interior. The amount of the
scooch is controlled by the user-defined parameter $\kappa$.  
The condition in \Thm{subproblem-convergence} is exactly that the starting
point should not have any zeros that are ultimately inadmissible. 
If we discover that a sequence of iterates leads to an inadmissible
zero (or almost-zero), we restart the method by restarting the method
with a new starting point.
This adjustment prevents convergence to non-KKT points.
Note that all the quantities needed to perform the check are
precomputed and that there is no change to the algorithm besides
adjusting a few zero entries in the current factor matrix.
The fix for the inadmissible
zeros is compatible with the Lee-Seung algorithm  for LS error  as
well. 

Lin \cite{Li07a} has made a similar observation in the LS
case and applied changes to his gradient descent version of the
Lee-Seung method. Our correction is different and is directly
incorporated into the multiplicative update scheme rather than
requiring a different update formula.
Gillis and Glineur \cite{GiGl08} proposed a
more drastic fix by restricting 
the factor matrices to have entries in $[\epsilon, \infty)$ for
some small positive $\epsilon$. 
Avoiding all zeros clearly rules out the possibility of getting stuck
at an inadmissible zero, but does so at the expense of 
eliminating any hope of obtaining sparse factor matrices, a desirable
property  in many applications.

\subsection{Practical Considerations on Convergence}
\label{sec:stopping}

The convergence conditions on the subproblem require that 
$\min ( \MBn , \M{E} - \MPhin ) = 0$.
We do not
require the value to be exactly zero but instead check that it is
smaller in magnitude than the user-defined parameter $\tau$.
We break out of the subproblem loop as soon as this condition is satisfied.

From \Thm{kkt}, we can check for overall convergence by verifying
\Eqn{kkt-conditions}.
We do not want to calculate this at the end of every $n$-loop because
it is expensive. Instead, we know that the iterates will stop changing
once we have converged and so we can validate the convergence of all
factor matrices by checking that no factor matrix has been modified
and every subproblem has converged. 

\subsection{Sparse Tensor Implementation}
\label{sec:sparse}
Consider a large-scale sparse tensor that is too large to be stored as a dense tensor requiring $\prod_n I_n$
memory. In this case, we can store the tensor as a sparse tensor as
described in \cite{BaKo07}, requiring only $(N+1) \cdot
\mathop{\rm nnz}(\TX)$ memory.

The elementwise division in the update of $\MPhi$ 
requires that we divide the tensor (in matricized
form) $\M{X}$ by the current model estimate (in matricized form)
$\M{M} = \MB\MPi$. Unfortunately, we cannot afford to store $\M{M}$
explicitly as a dense tensor because it is the same size as $\TX$.  In
fact, we generally cannot even form $\MPi$ explicitly because it
requires almost as much storage as $\TM$.
We observe, however, that we need only calculate the values of $\M{M}$
that correspond to nonzeros in $\M{X}$. 

Let $P = \text{nnz}(\T{X})$. Then we can store the sparse tensor
$\T{X}$ as a set of values and multi-indices, $(\Sn{v}{p}, \MIn{i}{p})$ for
$p=1,\dots,P$. In order to avoid forming the current model estimate,
$\T{M}$, as a dense object, we will store only selected rows of $\MPi$,  one
per nonzero in $\T{X}$; we denote these rows by
$\Vn{w}{p}$ for $p=1,\dots,P$.
The $p$th vector is given by the elementwise product of rows of the
factor matrices, i.e.,
\begin{displaymath}
  \Vn{w}{p} = 
  \Mn{A}{1}(\MInE{i}{p}{1},:) \Hada \cdots \Hada
  \Mn{A}{n-1}(\MInE{i}{p}{n-1},:) \Hada
  \Mn{A}{n+1}(\MInE{i}{p}{n+1},:) \Hada \cdots \Hada
  \Mn{A}{N}(\MInE{i}{p}{N},:) .
\end{displaymath}
In order to determine $\That{X} = \T{X} \Divide \T{M}$ in the calculation
of $\MPhi$, we proceed as follows. The tensor $\That{X}$ will have the
same nonzero pattern as $\T{X}$, and we let $\Shatn{v}{p}$ denote its
values. It can be determined that
\begin{displaymath}
  \Shatn{v}{p} = \Sn{x}{p} / \left< \Vn{w}{p} ,
    \Mn{A}{n}(\MInE{i}{p}{n},:) \right>.
\end{displaymath}
To calculate $\MPhi = \Mhat{X} \MPi$, we simply have
\begin{displaymath}
  \MPhi(i',r) = \sum_{p : \MInE{i}{p}{n} = i'} \Shatn{v}{p} \Vn{w}{p}(r).
\end{displaymath}
The storage of the $\Vn{w}{p}$ for $p=1,\dots,P$ vectors and the
entries $\Shatn{v}{p}$ requires $(R+1)P$ additional storage.

\section{Numerical Results for CP-APR}
\label{sec:numerical}

\subsection{Comparison of Objective Functions  for Sparse Count Data}
\label{sec:compare_objectives}
We contend that, for sparse count data, KL divergence \Eqn{nll} is a better
objective function. To support our claim, we
consider simulated data where we know the correct answer. 
Specifically, we consider a 3-way tensor ($N=3$) of size $1000 \times 800 \times
600$ and $R=10$ factors. It will be generated from a model $\T{M} =
\KTsmall{\Vl; \Mn{A}{1},\dots,\Mn{A}{N}}$. The entries of the vector
$\Vl$ are selected uniformly at random from $[0,1]$.  Each factor
matrix $\Mn{A}{n}$ is generated as follows:
\begin{inparaenum}[(1)]
\item For each column in $\Mn{A}{n}$, randomly select 10\% (i.e.,
  $1/R$) of the entries uniformly at random from the
  interval $[0,100]$.
\item The remaining entries are selected uniformly at random from
   $[0,1]$.
\item Each column is scaled so that its 1-norm is 1 (i.e., its sum is 1).
\end{inparaenum}
An ``observed" tensor can be thought of as the outcome of tossing
$\nu \ll \prod I_n$ balls into $\prod I_n$ empty urns where  
each entry of the tensor corresponds to an urn. For each ball, we first draw a factor $r$
with probability $\lambda_r/\sum\lambda_r$. 
The indices $(i,j,k)$ are selected randomly proportional to $\MnC{A}{n}{r}$ for $n=1,2,3$.
In other words, 
the ball is then tossed into the $(i,j,k)$th urn 
with probability $\MnE{A}{1}{ir}\MnE{A}{2}{jr}\MnE{A}{3}{kr}$. In this
manner, the balls are allocated across the urns independently of each other.
This procedure generates entries $\TE{x}{i}$ that are each distributed
as $\text{Poisson}(\TE{m}{i})$.
We adjust the final $\V{\lambda}$ so that the scale matches that of
$\T{X}$, i.e., $\Vl \gets \nu \Vl / \|\Vl\|$. 
We generate problems where the number of observations ranges from 480,000 (0.1\%) down to 24,000
(0.005\%). 
Recall that \As{full_row_rank} implies that the absolute
minimum number of observations is $R \cdot \max_n I_n = 10,000$.
We have used very few observations, as real problems do indeed tend
to be this sparse.

\begin{table}[t]
  \centering
  \begin{tabular}{r@{\;}c|*2{c@{\;\;}c}|*2{c@{\;\;}c}}
    \multicolumn{2}{c}{}
    & \multicolumn{4}{c}{Least Squares}  
    & \multicolumn{4}{c}{KL Divergence}     \\
    \multicolumn{2}{c}{}
    & \multicolumn{2}{c}{Lee-Seung LS}  
    & \multicolumn{2}{c}{CP-ALS} 
    & \multicolumn{2}{c}{Lee-Seung KL} 
    & \multicolumn{2}{c}{CP-APR}\\
    \multicolumn{2}{c}{Observations} 
& FMS & \#Cols & FMS & \multicolumn{1}{@{\;}c}{\#Cols} & FMS & \#Cols & FMS & \#Cols \\ \hline
480000 & (0.100\%) & 0.58 & 6.4  & 0.71 & 7.3  & 0.89 & 8.7  & 0.96 & 9.5 \\ 
240000 & (0.050\%) & 0.51 & 5.4  & 0.72 & 7.4  & 0.83 & 8.2  & 0.91 & 9.2 \\ 
48000 & (0.010\%) & 0.37 & 3.8  & 0.59 & 6.3  & 0.76 & 7.5  & 0.80 & 7.9 \\ 
24000 & (0.005\%) & 0.33 & 3.5  & 0.51 & 5.7  & 0.72 & 6.6  & 0.74 & 6.9 \\ \hline
  \end{tabular}
  \caption{Accuracy comparison (mean of 10 trials) using the factor
    match score (FMS) and the number of columns correctly identified
    in the first factor matrix.}
  \label{tab:simulated}
\end{table}

\Tab{simulated} shows comparisons of four methods.
The first two are optimizing LS:
Lee-Seung for LS and alternating LS with no
nonnegativity constraints (CP-ALS).
The last two are optimizing KL divergence:
Lee-Seung for KL divergence and our method (CP-APR).
We have also tested the modified Lee-Seung method of Finesso and Spreij
\cite{FiSp06,ZaPe11}, but it is only a scaled version of the Lee-Seung
method for KL divergence and gave nearly identical results which are omitted.
All implementations are from Version 2.5 of Tensor Toolbox for MATLAB
\cite{TTB_Software,BaKo07,BaBeBr08}; exact parameter settings are 
provided in \App{compare_objectives-app}.
We report the factor match score (FMS), a measure in $[0,1]$ of how close the
computed solution is to the true solution. A value of 1 is ideal. Since the FMS measure is
somewhat abstract, we also report the number of columns in the first
factor matrix such that the cosine of the angle between the true
solution and the computed solution is greater than 0.95.  A value of
10 is ideal since we have used $R=10$.
The reported values are averages over 10 problems. 
See \App{compare_objectives-app} for precise formulas for both measures.
Although these problems are extremely sparse, all methods are able to
correctly identify components in the data. Overall, the methods
optimizing KL divergence are superior to those optimizing least
squares.
We also observe that CP-APR is an improvement compared to
Lee-Seung~KL; we provide later evidence that this improvement is more
likely due to the inadmissible zero fix than the extra inner
iterations (which provide a benefit of enhanced speed rather than accuracy).

\subsection{Fixing Misconvergence of Lee-Seung}
\label{sec:misconvergence}

We demonstrate the effectiveness of our simple fix for avoiding
inadmissible zeros, as described in \Sec{zeros}.  
Our technique is based on the same observation on inadmissible zeros
as in Lin \cite{Li07a}, but the change to the algorithm is
different. 
As in \cite{GoZh05}, we consider fitting a rank-10 bilinear model for
a $25 \times 15$ dense positive matrix with entries drawn
independently and uniformly 
from $[0,1]$. 
We apply CP-APR  using $\ell_{\max} = 1, \tau = 10^{-15}, 
\epsilon = 0, \kappa_{\text{tol}} = 100 \cdot
\epsilon_{\text{mach}}$. 
We do two runs: one with $\kappa = 0$, corresponding to the standard
Lee-Seung (KL version) algorithm, and the other with 
$\kappa = 10^{-10}$ to move away from inadmissible zeros. In
both runs we use the same strictly positive initial guess.
\Fig{misconvergence} shows the magnitude of the KKT residual over more than $10^{5}$ iterations. 
When $\kappa > 0$, the sequence clearly convergences. On the other
hand when $\kappa = 0$ the iterates appear to get stuck at a non-KKT
point.  Closer inspection of the factor matrix iterates reveals a
single offending inadmissible zero, i.e., its partial
derivative is $-0.0016$ but should be nonnegative. Hence, we use
positive values of $\kappa$ in our experiments.

\begin{figure}
\centering
\includegraphics[scale=0.6]{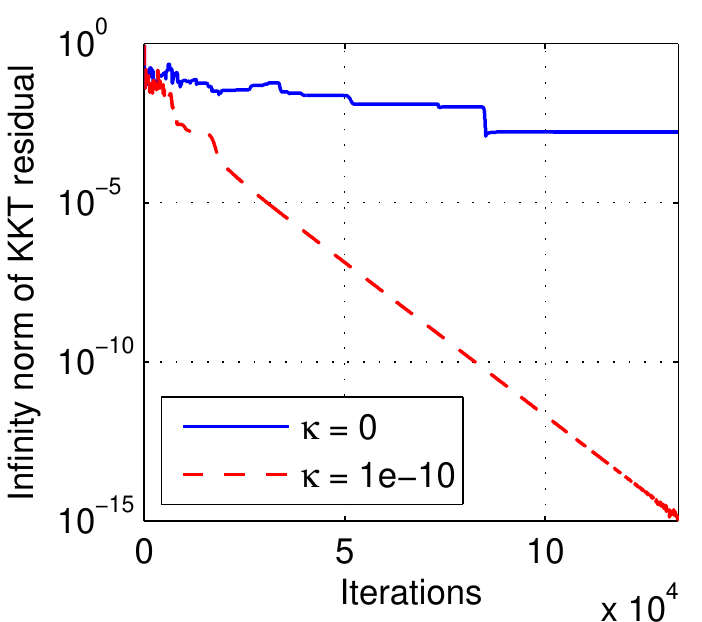}
\caption{Lee-Seung permitting inadmissible zeros (blue solid line) and avoiding inadmissible zeros (red dashed line).}
\label{fig:misconvergence}
\end{figure}

\subsection{The Benefit of Extra Inner Iterations}

We show that increasing the maximum number of inner iterations
$\ell_{\max}$ can accelerate the convergence in \Tab{timing_plus}. Recall that $\ell_{\max}
= 1$ corresponds to the Lee-Seung algorithm \cite{LeSe99,WeWe01}.  We consider a 3-way
tensor ($N = 3$) of size $500 \times 400 \times 300$ and $R = 5$
factors. We generate 100 problem instances from 100 randomly generated
models $\T{M} = \KTsmall{\Vl; \Mn{A}{1},\dots,\Mn{A}{N}}$ as described
in \Sec{compare_objectives} with 0.1\% observations.
We compare CP-APR with $\ell_{\max} = 1, 5,$ and $10$. 
and the other parameters set as  $k_{\max} = 10^6$, $\tau
= 10^{-4}$, $\kappa=10^{-8}$,
$\kappa_{\text{tol}}=100 \cdot \epsilon_{\rm mach}$,
$\epsilon=0$.
We track both the
number of multiplicative updates
(\AlgLine{CPAPR-detailed}{multiplicative_update})
and the CPU time using the MATLAB command
\texttt{cputime}. The experiments were performed on an iMac computer
with a 3.4 GHz Intel Core i7 processor and 8 GB of RAM. 
\Tab{FMS_timing} reports the FMS scores as we vary $\ell_{\max}$, and
we observe that the value of $\ell_{\max}$ does not significantly impact
accuracy. 
However, we observe that increasing $\ell_{\max}$ can decrease the
overall work and runtime.
Tables~\ref{tab:counts} and \ref{tab:timing}
present the average number of
multiplicative updates and total run times respectively. The
distribution of updates and times was highly skewed as some problems
required a substantial number of iterations. Nonetheless, we 
see a monotonic decrease in the number of updates and time as
$\ell_{\max}$ increases. The differences are more substantial when
comparing wall clock time. The reason for the
disproportionate decrease in wall-clock time compared to the tally of
updates is that the cost of the calculation of $\M{\Pi}$ (in
\AlgLine{CPAPR-detailed}{update_pi}) is amortized over 
all the subproblem iterations.

\newcommand{\CC}[1]{\multicolumn{1}{c}{#1}}
\begin{table}[t]
\centering
\subfloat[Factor match scores]{\label{tab:FMS_timing}
\begin{tabular}{cccc}
 {\bf $\ell_{\max}$} & 1 & 5  & 10 \\ \hline
Median & 0.9858 & 0.9858 &0.9862 \\
Mean   & 0.9483 & 0.9514 & 0.9603 \\ \hline
\end{tabular}
}
\\
\subfloat[Number of multiplicative updates]{
  \label{tab:counts}
\begin{tabular}{crrr}
 \CC{$\ell_{\max}$} & \CC{1} & \CC{5}  & \CC{10} \\ 
  \hline
  Median & 9819 & 7655 & 7290 \\ 
  Mean & 16370 & 11710 & 11660 \\ 
   \hline
\end{tabular}
}
~~
\subfloat[Time (seconds)]{
  \label{tab:timing}
\begin{tabular}{crrr}
 \CC{$\ell_{\max}$} & \CC{1} & \CC{5}  & \CC{10} \\ 
  \hline
  Median & 168.70 & 68.98 & 55.00 \\ 
  Mean & 299.60 & 106.10 & 87.92 \\ 
   \hline
\end{tabular}
}
  \caption{CP-APR with different values of $\ell_{\max}$ for sparse count
    data over 100 trials.}
  \label{tab:timing_plus}
\end{table}

\subsection{Enron Data}

We consider the application of CP-APR to email data from the infamous
Federal Energy Regulatory Commission (FERC) investigation of Enron
Corporation. We use the version of the dataset prepared by Zhou et
al.\@ \cite{ZhGoMaWa07} and further processed by Perry and Wolfe
\cite{PeWo10}, which includes detailed profiles on the employees. The
data is arranged as a three-way tensor $\T{X}$ arranged as sender $\times$ receiver
$\times$ month, where entry $(i,j,k)$ indicates the number of
messages from employee $i$ to employee $j$ in month $k$.  The original
data set had 38,388 messages (technically, there were only 21,635
messages but some messages were sent to multiple recipients and so are
counted multiple times) exchanged between 156 employees over 44 months
(November 1998 -- June 2002). We preprocessed the data, removing
months that had fewer than 300 messages and removing any employees that
did not send and receive an average of at least one message per
month. Ultimately, our data set spanned 28 months (December 1999 --
March 2002), involved 105 employees, and a total of 33,079
messages. The data is arranged so that the senders are sorted by
frequency (greatest to least).
The tensor representation has a total of 8,540 nonzeros (many of the
messages occur between the same sender/receiver pair in the same time period).
The tensor is 2.7\% dense.

We apply CP-APR to find a model for the data.
There is no ideal method for choosing the number of
components. Typically, this value is selected through trial and error,
trading off accuracy (as the number of components grows) and model
simplicity.  
Here we show results for $R=10$ components.
We use the same settings for CP-APR as specified in \App{compare_objectives-app}.

\newcommand{\EnronFig}[2]{\subfloat[Component #1]{\label{fig:enron-#2}%
\includegraphics[width=0.45\textwidth,trim=10 25 10 10,clip]{component-#2}}}

\begin{figure}
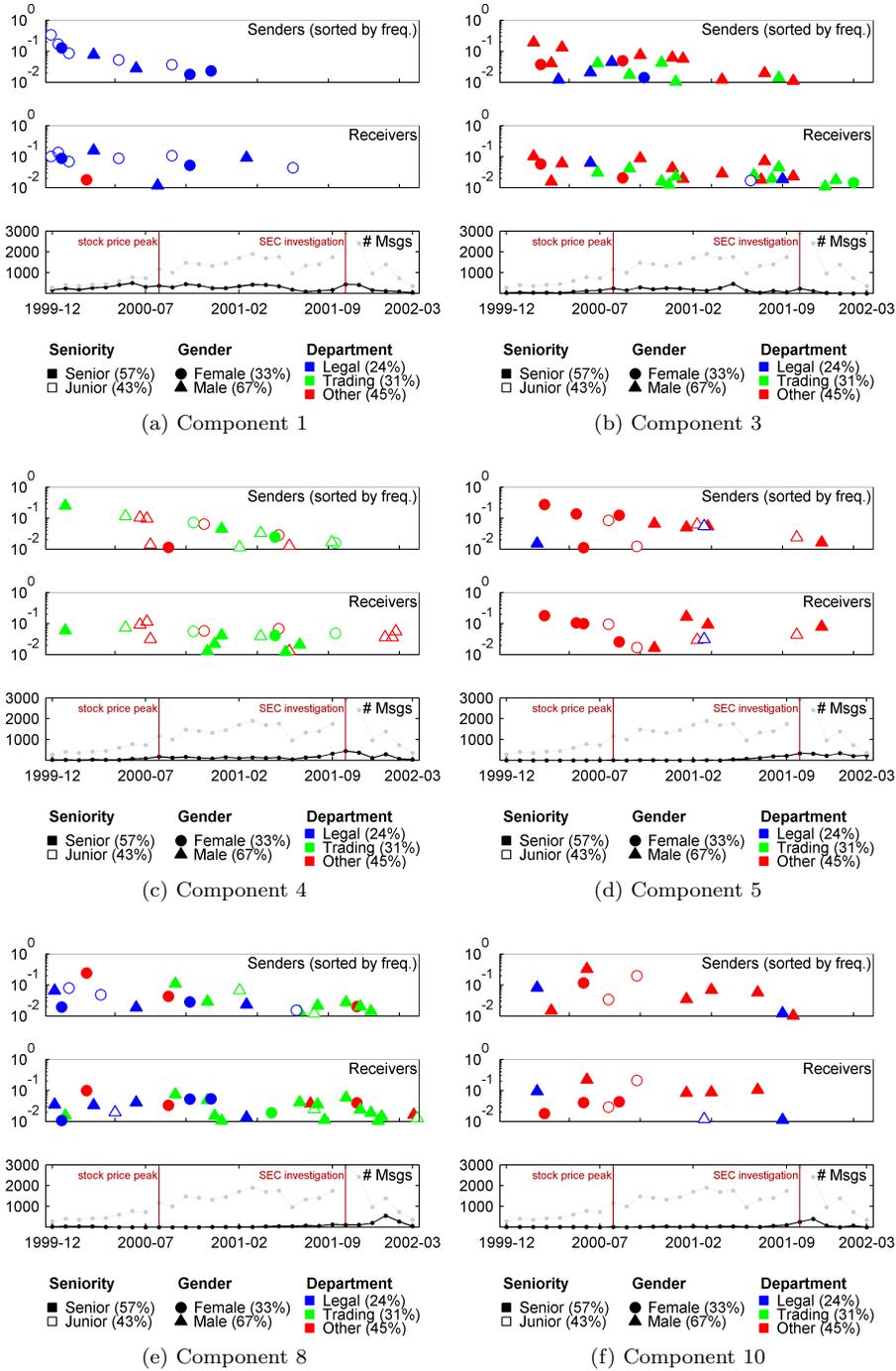

  \centering
  \EnronFig{1}{01}~
  \EnronFig{3}{03}\\
  \EnronFig{4}{04}~
  \EnronFig{5}{05}\\
  \EnronFig{8}{08}~
  \EnronFig{10}{10}
  \caption{Components from factorizing the Enron data.}
  \label{fig:enron}
\end{figure}

\Fig{enron} illustrates six components in the resulting
factorization; the other four are shown in \App{enron-app}. For each component, the top two plots shows the
activity of senders and receivers, with the employees ordered from
left to right by frequency of sending emails. Each employee has a
symbol indicating their seniority (junior or senior), gender (male or
female), and department (legal, trading, other). The sender and
receiver factors have been normalized to sum to one, so the height of
the marker indicates each employee's relative activity within the
component. The third component (in the time dimension) is scaled so
that it indicates total message volume explained by that
component. The light gray line shows the total message volume. It is
interesting to observe how the components break down into specific
subgroups. For instance, component 1 in \Fig{enron-01} consists of
nearly all ``legal'' and is majority
female. This can be contrasted to component 5 in \Fig{enron-05}, which is nearly all
``other'' and also majority female.
Component 3 in \Fig{enron-03} is a conversation among ``senior'' staff
and mostly male; on the other hand, ``junior'' staff are more prominent in
Component 4 in \Fig{enron-04}.
Component 8 in \Fig{enron-08} seems to be a conversation among ``senior'' staff after
the SEC investigation has begun. 
Component 10 in \Fig{enron-10} indicates that a couple of ``legal''
staff are communicating with many ``other'' staff immediately after
the SEC investigation is announced, perhaps advising the ``other''
staff on appropriate responses to investigators.

\subsection{SIAM Data}
As another example, we consider five years (1999-2004) of SIAM
publication metadata that has previously been used by Dunlavy et
al.~\cite{DuKoKe11}. Here, we build a three-way sparse tensor based on
title terms (ignoring common stop words), authors, and journals. The
author names have been normalized to last name plus initial(s).  The
resulting tensor is of size 4,952 (terms) $\times$ 6,955 (authors)
$\times$ 11 (journals) and has 64,133 nonzeros (0.017\% dense).  The
highest count is 17 for the triad (`education', `Schnabel B', `SIAM
Rev.'), which is a result of Prof. Schnabel's writing brief
introductions to the education column for \emph{SIAM Review}. In fact,
the next 4 highest counts correspond to the terms `problems',
`review', `survey', and `techniques', and to authors `Flaherty J' and
`Trefethen N'.

\setlength\tymin{2in}
\begin{table}
  \centering\footnotesize
  \begin{tabulary}{\linewidth}{|C|L|L|L|}
    \hline
\bf \# & \bf Terms & \bf Authors & \bf Journals \\ \hline
1 & 
graphs, problem, algorithms, approximation, algorithm, complexity, optimal, trees, problems, bounds & 
Kao MY, Peleg D, Motwani R, Cole R, Devroye L, Goldberg LA, Buhrman H, Makino K, He X, Even G & 
SIAM J Comput, SIAM J Discrete Math\\ \hline
2 & 
method, equations, methods, problems, numerical, multigrid, finite, element, solution, systems & 
Chan TF, Saad Y, Golub GH, Vassilevski PS, Manteuffel TA, Tuma M, Mccormick SF, Russo G, Puppo G, Benzi M & 
SIAM J Sci Comput\\ \hline
3 & 
finite, methods, equations, method, element, problems, numerical, error, analysis, equation & 
Du Q, Shen J, Ainsworth M, Mccormick SF, Wang JP, Manteuffel TA, Schwab C, Ewing RE, Widlund OB, Babuska I & 
SIAM J Numer Anal\\ \hline
4 & 
control, systems, optimal, problems, stochastic, linear, nonlinear, stabilization, equations, equation & 
Zhou XY, Kushner HJ, Kunisch K, Ito K, Tang SJ, Raymond JP, Ulbrich S, Borkar VS, Altman E, Budhiraja A & 
SIAM J Control Optim\\ \hline
5 & 
equations, solutions, problem, equation, boundary, nonlinear, system, stability, model, systems & 
Wei JC, Chen XF, Frid H, Yang T, Krauskopf B, Hohage T, Seo JK, Krylov NV, Nishihara K, Friedman A & 
SIAM J Math Anal\\ \hline
6 & 
matrices, matrix, problems, systems, algorithm, linear, method, symmetric, problem, sparse & 
Higham NJ, Guo CH, Tisseur F, Zhang ZY, Johnson CR, Lin WW, Mehrmann V, Gu M, Zha HY, Golub GH & 
SIAM J Matrix Anal A\\ \hline
7 & 
optimization, problems, programming, methods, method, algorithm, nonlinear, point, semidefinite, convergence & 
Qi LQ, Tseng P, Roos C, Sun DF, Kunisch K, Ng KF, Jeyakumar V, Qi HD, Fukushima M, Kojima M & 
SIAM J Optimiz\\ \hline
8 & 
model, nonlinear, equations, solutions, dynamics, waves, diffusion, system, analysis, phase & 
Venakides S, Knessl C, Sherratt JA, Ermentrout GB, Scherzer O, Haider MA, Kaper TJ, Ward MJ, Tier C, Warne DP & 
SIAM J Appl Math\\ \hline
9 & 
equations, flow, model, problem, theory, asymptotic, models, method, analysis, singular & 
Klar A, Ammari H, Wegener R, Schuss Z, Stevens A, Velazquez JJL, Miura RM, Movchan AB, Fannjiang A, Ryzhik L & 
SIAM J Appl Math\\ \hline
10 & 
education, introduction, health, analysis, problems, matrix, method, methods, control, programming & 
Flaherty J, Trefethen N, Schnabel B, [None], Moon G, Shor PW, Babuska IM, Sauter SA, Van Dooren P, Adjei S & 
SIAM Rev\\ \hline
  \end{tabulary}
  \caption{Highest-scoring items in a 10-term factorization of the term $\times$ author $\times$ journal tensor from five years of SIAM publication data.}
  \label{tab:siam}
\end{table}

Computing a ten-component factorization yields the results shown in
\Tab{siam}.  
We use the same settings for CP-APR as specified in \App{compare_objectives-app}.
In the table, for the term and author modes, we list any entry whose
factor score is greater than $10^{-7}\cdot I_n$, where $I_n$ is the
size of the $n$th mode; in the journal mode, we list any entry greater
than 0.01.
The 10th component corresponds to introductions written by section
editors for \emph{SIAM Review}. The 1st component shows that there is
overlap in both authors and title keyword between the \emph{SIAM
  J. Computing} and the \emph{SIAM J. Discrete Math}. The 2nd and 3rd
components have some overlap in topic and two overlapping authors, but
different journals. Both components 8 and 9 correspond to the same
journal but reveal two subgroups of authors writing on slightly
different topics.

\section{Conclusions \& Future Work}
\label{sec:conclusions}

We have developed an alternating Poisson regression fitting algorithm,
CP-APR, for PTF for sparse count data. 
When such data is generated via a Poisson process, we show that 
methods based on KL divergence such as CP-APR recovers the true CP
model more reliably than methods based on LS. Indeed, 
in classical statistics, it is well known that the randomness observed
in sparse count data is better explained and analyzed by the Poisson
model (KL divergence) than a Gaussian one (LS error). 

Our algorithm can be considered an extension of the Lee-Seung method
for KL divergence with multiple inner iterations (similar to \cite{GiGl11}
for LS). 
Allowing for multiple inner iterations  has the benefit of
accelerating convergence.  
Moreover, being very similar to an existing method, CP-APR is simple
to implement with the exception of some details of the sparse implementation as
described in \Sec{sparse}. 
To the best of our knowledge, ours is the
first implementation of any KL-divergence-based method
for large-scale sparse tensors.

In \Sec{convergence}, we provide a general-purpose convergence proof
for the alternating Gauss-Seidel approach. 
The regularity conditions imposed in our proofs make rigorous and
concrete our intuition that in the context of sparse count data,
CP-APR will converge provided that the data tensor meets a minimal
density and that nonzeros are sufficiently spread throughout the data
tensor with respect to the size of the factor matrices being fit. 
Any subproblem solver can
be substituted for the MM method without changing the theory.  A
benefit of the MM subproblem solver is 
that its multiplier matrix can be used to
explicitly track convergence based on the KKT conditions. Moreover, we
observe that we can use the KKT information to identify and correct
inadmissible zeros using a ``scooch.'' Lin \cite{Li07a} had a similar
observation in the LS case but came up with a different
correction technique. We analyze convergence of the MM subproblem with
the ``scooch'' in order to show that it will always converge. Our
results are stronger than past results because they allow iterates
with some zero entries. Even though zero entries are possible to avoid
in exact arithmetic, they often occur numerical computations and so
are important to consider.

There remains much room for future work. Foremost among practical
considerations is speed of convergence. Although multiplicative
updates are relatively simple to compute, CP-APR can require many
iterates. One approach to accelerating convergence would be to replace
the MM algorithm subproblem solver. For example, Kim et al.\@
\cite{KiSrDh10} present fast quasi-Newton methods for minimizing
box-constrained 
convex functions that can be used to solve a nonnegative LS
or minimum KL-divergence subproblem in a nonlinear Gauss-Seidel
solver.  
A second approach is to focus on the sequence of outer iterates. 
Zhou et al.\@ \cite{ZhAlLa11} provide a general quasi-Newton
acceleration scheme for iterative methods based on a quadratic
approximation of the iteration map instead of the loss. 

There has also been significant work in finding sparse factors via
$\ell_1$-penalization for matrices \cite{LiZhJiSh10} and tensors
\cite{MoHaAr08, WaMaLoLi08, FrHa08, LiLiWoYe12}. Sparse factors often
provide more easily interpreted models, and penalization may also
accelerate the convergence. While the factor matrices generated by
CP-APR may be naturally sparse without imposing an $\ell_1$-penalty, the
degree of sparsity is not currently tunable.
One may also consider extensions of this work in the context of
missing data \cite{Ki97,BuFi05,ToBr05,AcDuKoMo11} and for alternative
tensor factorizations such as Tucker \cite{FrHa08}.

Perhaps most challenging, however, are open questions related to rank and inference. Questions about how to choose rank are not new; but given the context of sparse count data, might that structure
be exploited to derive a sensible heuristic or even rigorous criterion for choosing the rank? We already see that \As{full_row_rank} imposes an upper bound on the rank to ensure algorithmic convergence.  Regarding inference, our focus in this work was in thoroughly developing the algorithmic groundwork for fitting a PTF model for sparse count data. CP-APR can be used to estimate latent structure. Once an estimate is in hand, however, it is natural to ask how much uncertainty there is in that estimate. For example, is it possible to put a confidence interval around the entries in the fitted factor matrices, especially zero or near zero entries? Given that inference for the related but simpler case of Poisson regression has been worked out, we suspect that a sensible solution is waiting to be found.  The benefits of answering these questions warrant further investigation. We highlight them as important topics for future research. 

\section*{Acknowledgments}
We thank our colleagues at Sandia for numerous helpful conversations
in the course of this work, especially Grey Ballard and Todd
Plantenga. We also thank Kenneth Lange for pointing us to relevant references
on emission tomography. Finally, we thank the anonymous referees and associate editor for 
suggestions which greatly improved the quality of the manuscript.

\appendix
\section{Notation Details}
\label{sec:notation-details}

\paragraph{Outer product} The outer product of $N$
vectors is an $N$-way tensor. For example,
\begin{inlinemath}
  (\V{a} \circ \V{b} \circ \V{c})_{ijk} = \VE{a}{i} \VE{b}{j} \VE{c}{k}.
\end{inlinemath}

\paragraph{Elementwise multiplication and division}

Let $\T{A}$ and $\T{B}$ be two same-sized tensors (or matrices). Then
$\T{C} = \T{A} \Hada \T{B}$ yields a tensor that is the same size as
$\T{A}$ (and $\T{B}$) such that  $\TE{c}{i} = \TE{A}{i} \TE{B}{i}$ for
all $\MI{i}$. Likewise,
$\T{C} = \T{A} \Divide \T{B}$ yields a tensor that is the same size as
$\T{A}$ (and $\T{B}$) such that  $\TE{c}{i} = \TE{A}{i} / \TE{B}{i}$ for
all $\MI{i}$.

\paragraph{Khatri-Rao product}
Give two matrices $\M{A}$ and $\M{B}$ of sizes $I_1 \times R$ and $I_2
\times R$, then $\M{C} = \M{A} \Khat \M{B}$ is a matrix of size $I_1
I_2 \times R$ such that
\begin{displaymath}
  \M{C} =
  \begin{bmatrix}
    \MC{A}{1} \Kron \MC{B}{1}
    & \MC{A}{2} \Kron \MC{B}{2}
    & \cdots
    & \MC{A}{R} \Kron \MC{B}{R}
  \end{bmatrix},
\end{displaymath}
where the Kronecker product of two vectors of size $I_1$ and $I_2$ is
a vector of length $I_1I_2$ given by
\begin{displaymath}
  \V{a} \Kron \V{b} =
  \begin{bmatrix}
    a_1 \V{b} \\ a_2 \V{b} \\ \vdots \\ a_{I_1} \V{b}
  \end{bmatrix}.
\end{displaymath}

\paragraph{Matricization of a tensor}
The mode-$n$ matricization or unfolding of a tensor
$\T{X}$ is denoted by $\Mz{X}{n}$ and is of size $I_n \times J_n$
where $J_n \equiv \prod_{m \neq n} I_n$. In this case, tensor element
$\MI{i}$ maps to matrix element $(i,j)$ where
\begin{displaymath}
  i = i_n \qtext{ and }
  j = 1 + \displaystyle\sum_{{k=1}\atop{k \neq n}}^N (i_k - 1) \left( \prod_{{m=1}\atop{m \neq
        n}}^{k-1} I_m \right).
\end{displaymath}

\section{Proof of \Lem{OmegaZeta}}
\label{sec:proof-OmegaZeta}
In this section, we provide a proof for \Lem{OmegaZeta}. We first
establish two useful lemmas.

\smallskip
\begin{lemma}\label{lem:lambda-bound}
  Let $\T{X}$ be fixed, let 
  $\TM =\KTsmall{\Vl; \Mn{A}{1},\dots,\Mn{A}{N}}$, and
  let $f(\TM)$ be the objective function as in \Eqn{nlp}.
  If $f(\TM) \leq \zeta$ for some constant $\zeta > 0$, then
  there exists constants $\xi',\xi > 0$ (depending on $\T{X}$ and $\zeta$) such that 
  $\V{e}\Tra \Vl \in [\xi', \xi]$. 
\end{lemma}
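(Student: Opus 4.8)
The plan is to reduce the whole statement to a single one-variable inequality for the scalar $S \equiv \V{e}\Tra\Vl = \sum_{r=1}^R \lambda_r$. Two structural facts drive the argument. First, the column-normalization of the factor matrices forces the total mass of the model to equal $S$: since each column of each $\MAn$ sums to one, expanding and interchanging sums gives
\[
  \sum_{\MI{i}} \TE{m}{i}
  = \sum_{r=1}^R \lambda_r \prod_{n=1}^N \Bigl(\sum_{i_n=1}^{I_n} (\MnC{A}{n}{r})_{i_n}\Bigr)
  = \sum_{r=1}^R \lambda_r = S.
\]
Second, because every factor entry lies in $[0,1]$, each product $\prod_n (\MnC{A}{n}{r})_{i_n}$ lies in $[0,1]$, so $\TE{m}{i} = \sum_r \lambda_r \prod_n (\MnC{A}{n}{r})_{i_n} \le \sum_r \lambda_r = S$ for every $\MI{i}$.

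Next I would substitute these into $f$. Write $T \equiv \sum_{\MI{i}} \TE{x}{i}$, a fixed positive constant since the data tensor $\T{X}$ is not identically zero. On every index with $\TE{x}{i} > 0$ the finiteness of $f(\TM) \le \zeta$ guarantees $\TE{m}{i} > 0$, so the bound $\TE{m}{i} \le S$ yields $\log \TE{m}{i} \le \log S$ and hence $-\sum_{\MI{i}} \TE{x}{i} \log \TE{m}{i} \ge -T\log S$. Combining with the first fact,
\[
  f(\TM) = S - \sum_{\MI{i}} \TE{x}{i} \log \TE{m}{i} \ge S - T\log S.
\]
The hypothesis $f(\TM) \le \zeta$ then collapses to the scalar inequality $\phi(S) \le \zeta$, where $\phi(S) \equiv S - T\log S$.

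Finally I would analyze $\phi$ on $(0,\infty)$. It is continuous and, because $T > 0$, strictly convex ($\phi''(S) = T/S^2 > 0$) with $\phi(S) \to +\infty$ both as $S \to 0^+$ (the $-T\log S$ term blows up) and as $S \to \infty$ (the linear term dominates); its unique minimum sits at $S = T$. Consequently the sublevel set $\{S > 0 : \phi(S) \le \zeta\}$ is a compact subinterval $[\xi',\xi] \subset (0,\infty)$, nonempty because the given $\TM$ places $S$ inside it. Since $\xi'$ and $\xi$ depend only on $\zeta$ and on $T$, which in turn depends only on $\T{X}$, the conclusion $\V{e}\Tra\Vl = S \in [\xi',\xi]$ follows.

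The one hypothesis I would flag is exactly where the argument could break: the lower bound $\xi' > 0$ needs $T = \sum_{\MI{i}} \TE{x}{i} > 0$, i.e.\ the data tensor has at least one nonzero. This is automatic in our setting—the density requirements underlying \As{full_row_rank} force many nonzeros—but for $\T{X} = \M{0}$ one could drive $S \to 0$ with $f \to 0$, and no positive lower bound would exist. Everything beyond this is routine one-variable calculus on $\phi$, so I expect no real obstacle; the only care needed is the bookkeeping around the convention $0 \cdot \log = 0$, which is handled by restricting the log comparison to indices with $\TE{x}{i} > 0$.
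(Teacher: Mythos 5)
Your proposal is correct and follows essentially the same route as the paper's proof: both reduce $f(\TM)\le\zeta$ to a scalar sublevel-set condition $S - c\log S \le \zeta$ for $S = \V{e}\Tra\Vl$ using column stochasticity (your constant $T=\sum_{\MI{i}}x_{\MI{i}}$ is a slightly sharper choice than the paper's $\vartheta = (\prod_n I_n)\max_{\MI{i}}x_{\MI{i}}$, but plays the identical role), and then conclude compactness of the sublevel interval from the behavior of $\alpha\mapsto \alpha - c\log\alpha$ at $0^+$ and $+\infty$. Your explicit flag that the lower bound requires $\T{X}\neq\M{0}$ is a point the paper leaves implicit.
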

\begin{proof}
  Because the factor matrices are column stochastic, we can observe that
  \begin{equation}\label{eq:f-bound}
    \begin{aligned}
      f(\T{M})
      & = \V{e}\Tra\V{\lambda} - \sum_{\V{i}} \TE{X}{\V{i}}
      \log \left (\sum_{r} \lambda_r \; \MnE{A}{n}{i_1 r}
        \cdots \MnE{A}{n}{i_N r} \right ), \\
      & \geq \V{e}\Tra\Vl -
      \vartheta \log \left (\V{e}\Tra\Vl \right )
      \qtext{where} \vartheta = \left (\prod_{n=1}^N I_n \right ) \max_{\MI{i}}
      \, \TE{X}{i}.
    \end{aligned}
  \end{equation}
  We have $\zeta \geq \V{e}\Tra\Vl - \vartheta \log \left
    (\V{e}\Tra\Vl \right )$.  Let
    $g(\alpha) = \alpha -
  \vartheta \log(\alpha)$ where $\alpha >
  0$. We show that $g(\alpha) \leq \zeta$ implies there exists $\xi',\xi > 0$ such
  that $\alpha \in [\xi',\xi]$.  First assume there is no such lower bound $\xi'$.
    Then there is a sequence $\alpha_n$ tending to zero such that $g(\alpha_n) \leq \zeta$.
    But for sufficiently large $n$, we have that $-\vartheta \log(\alpha_n) > \zeta$. Since $\alpha_n > 0$
    for all $n$, we have that for sufficiently large $n$ the function $g(\alpha_n) > \zeta$. Therefore,
    there is such a lower bound $\xi'$.
    
    Now suppose there is no such upper bound $\xi$, and therefore there is an unbounded and increasing sequence
    $\alpha_n$ tending to infinity such that $g(\alpha_n) \leq \zeta$ for all $n$.
     Note that $g'(\alpha) = 1 - \vartheta/\alpha$.
    Since $g(\alpha)$ is convex, we have that
    \begin{equation*}
    g(\alpha) \geq g(2\vartheta) + g'(2\vartheta)(\alpha - 2\vartheta) = g(2\vartheta) + \frac{1}{2}\alpha - \vartheta.
    \end{equation*}
	This inequality, however, indicates that for sufficiently large $n$, the right hand side is greater than $\zeta$.
	Therefore, there must be an upper bound $\xi$.
  Substituting $\alpha = \V{e}\Tra\Vl$ completes the proof.
\end{proof}

\smallskip
\begin{lemma}\label{lem:finiteOnOmegaZeta}
  Let $\T{X}$ be fixed, and
  let $f(\TM)$ be the objective function as in \Eqn{nlp}.
  Let $\Omega(\zeta)$ be the convex hull of the level set of $f$ as defined in \Eqn{OmegaZeta}.
  The function $f(\TM)$ is bounded for all $\TM \in \Omega(\zeta)$.
\end{lemma}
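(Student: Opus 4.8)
The plan is to decompose $f(\TM) = \V{e}\Tra\Vl - \sum_{\MI{i}:\,\TE{x}{i}>0}\TE{x}{i}\log\TE{m}{i}$, using column-stochasticity to identify the linear part $\sum_{\MI{i}}\TE{m}{i}$ with $\V{e}\Tra\Vl$, and to bound the two parts separately. The lower bound on $f$ is immediate: each summand $\TE{m}{i}-\TE{x}{i}\log\TE{m}{i}$ attains its minimum over $\TE{m}{i}\ge 0$ at $\TE{m}{i}=\TE{x}{i}$, so $f$ is bounded below by the $\T{X}$-dependent constant $\sum_{\MI{i}:\,\TE{x}{i}>0}(\TE{x}{i}-\TE{x}{i}\log\TE{x}{i})$ wherever it is finite. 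The substance of the lemma is therefore the upper bound, which reduces to two things: boundedness of the linear part (immediate from \Lem{lambda-bound}, since $\V{e}\Tra\Vl$ is a convex combination of values lying in $[\xi',\xi]$ and hence stays in that interval) and a \emph{uniform} positive lower bound $\TE{m}{i}\ge\delta_{\MI{i}}>0$ valid on all of $\Omega(\zeta)$ for every $\MI{i}$ with $\TE{x}{i}>0$.

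First I would prove the entrywise lower bound on the level set $L=\{\TM\in\Omega:f(\TM)\le\zeta\}$. Fix $\MI{i}$ with $\TE{x}{i}>0$ and bound every summand of $f$ other than the $\MI{i}$ one below by its global minimum; the constraint $f(\TM)\le\zeta$ then forces $\TE{m}{i}-\TE{x}{i}\log\TE{m}{i}\le C_{\MI{i}}$ for a finite constant $C_{\MI{i}}$ depending only on $\T{X}$ and $\zeta$. Since $\TE{m}{i}\ge0$, this gives $-\TE{x}{i}\log\TE{m}{i}\le C_{\MI{i}}$, hence $\TE{m}{i}\ge\exp(-C_{\MI{i}}/\TE{x}{i})=:\delta_{\MI{i}}>0$ for every $\TM\in L$.

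The main obstacle is transferring this bound from $L$ to its convex hull $\Omega(\zeta)$: because $\TE{m}{i}$ is a \emph{multilinear} function of the parameters $(\Vl,\Mn{A}{1},\dots,\Mn{A}{N})$, a convex combination of parameter tuples does not yield the convex combination of the entries $\TE{m}{i}$, so the bound does not propagate for free. To overcome this I would write an arbitrary point of $\Omega(\zeta)$ as a convex combination $\TM=\sum_{k=1}^K\theta_k\TM^{(k)}$ with each $\TM^{(k)}\in L$, invoking Carath\'eodory's theorem to ensure $K\le D+1$, where $D$ is the fixed dimension of the parameter space. Expanding the product of the $N+1$ convex-combination sums that define $\TE{m}{i}(\TM)$ and keeping only the nonnegative ``diagonal'' terms $k_0=\cdots=k_N=k$ yields $\TE{m}{i}(\TM)\ge\sum_{k}\theta_k^{N+1}\,\TE{m}{i}(\TM^{(k)})\ge\delta_{\MI{i}}\sum_k\theta_k^{N+1}$. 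A power-mean (Jensen) estimate gives $\sum_k\theta_k^{N+1}\ge K^{-N}\ge(D+1)^{-N}$, a positive constant independent of the chosen point, so $\TE{m}{i}(\TM)\ge\delta_{\MI{i}}(D+1)^{-N}>0$ uniformly on $\Omega(\zeta)$.

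With this uniform lower bound, the logarithmic part of $f$ is bounded above by $\sum_{\MI{i}:\,\TE{x}{i}>0}\TE{x}{i}\,\bigl|\log(\delta_{\MI{i}}(D+1)^{-N})\bigr|$, and together with the bound $\V{e}\Tra\Vl\le\xi$ this gives a finite upper bound for $f$ on $\Omega(\zeta)$; combined with the easy lower bound, the lemma follows. I expect essentially all the difficulty to be concentrated in the diagonal-extraction-plus-Carath\'eodory step that controls the multilinear entry on the hull; the surrounding estimates are routine.
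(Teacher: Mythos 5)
Your proof is correct, and its engine is the same as the paper's: both arguments exploit nonnegativity to discard the off-diagonal terms in the multilinear expansion of a convex combination, yielding a bound of the form $\TE{m}{i}(\TM)\geq\theta^{N+1}\,\TE{m}{i}(\TM')$ with $\TM'$ in the level set and $\theta$ bounded away from zero, and both use \Lem{lambda-bound} to control the linear part $\sum_{\MI{i}}\TE{m}{i}=\V{e}\Tra\Vl$. The organization differs, however. The paper works directly with a two-point combination $\Ttilde{M}=\alpha\Tbar{M}+(1-\alpha)\That{M}$ with $\alpha\in[0.5,1)$ and bounds each summand of $f(\Ttilde{M})$ by the corresponding summand of $f(\Tbar{M})$ plus a constant, arriving at $f(\Ttilde{M})\leq f(\Tbar{M})+\xi\prod_n I_n+(N+1)\log 2\sum_{\MI{i}}\TE{x}{i}$. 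You instead first extract from the constraint $f\leq\zeta$ a uniform entrywise lower bound $\TE{m}{i}\geq\delta_{\MI{i}}>0$ on the level set itself, and then propagate it to an arbitrary $K$-point combination via the diagonal terms, Carath\'eodory's theorem ($K\leq D+1$), and the power-mean estimate $\sum_k\theta_k^{N+1}\geq K^{-N}$. Your route is slightly longer but buys something genuine: a two-point combination of level-set points does not in general exhaust the convex hull of a nonconvex set, and the paper's bound cannot simply be iterated because the intermediate points need not lie in the level set; your Carath\'eodory step handles a general element of $\Omega(\zeta)$ with a constant independent of the chosen representation, so it is the more complete version of the argument. All the individual estimates you give (the reduction of $-\TE{x}{i}\log\TE{m}{i}\le C_{\MI{i}}$ from $f\le\zeta$, the diagonal extraction, and the Jensen step) check out.
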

\begin{myproof}
  Let $\Tbar{M},\That{M} \in \Set{\T{M} | f(\TM) \leq \zeta}$.
  Define
  $\Ttilde{M}$ to be the convex combination
  \begin{displaymath}
    \Ttilde{M} = \alpha \Tbar{M} + (1-\alpha) \That{M}
    \qtext{ where } \alpha \in [0.5,1).
  \end{displaymath}
  Note that the restriction on $\alpha$ is arbitrary but makes the proof
  simpler later on.
  Observe that
  \begin{displaymath}
    \tilde m_{\MI{i}} =
    \sum_r \left\{
      \left( \alpha \bar \lambda_r + (1 - \alpha) \hat \lambda_r \right)
      \prod_{n} \left(  \alpha \bar a_{i_nr}^{(n)} + (1 - \alpha) \hat a_{i_nr}^{(n)} \right)
    \right\}
  \end{displaymath}
  On the one hand, by \Lem{lambda-bound}, there exists $\xi > 0$ such that
  \begin{displaymath}
    \tilde m_{\MI{i}}
    \leq \sum_r \left( \alpha \bar \lambda_r + (1 - \alpha) \hat \lambda_r \right)
    = \alpha \sum_r \bar \lambda_r + (1-\alpha) \sum_r  \hat \lambda_r
    \leq \alpha \xi + (1 - \alpha) \xi = \xi.
  \end{displaymath}
  On the other hand,
  \begin{displaymath}
    \tilde m_{\MI{i}}
    \geq \sum_r \left\{
      \alpha \bar \lambda_r
      \prod_{n} \alpha \bar a_{i_nr}^{(n)}
    \right\}
    = \alpha^{N+1} \bar m_{\MI{i}}
  \end{displaymath}
  Thus,
  \begin{displaymath}
    \alpha^{N+1} \bar m_{\MI{i}}
    \leq \tilde m_{\MI{i}}
    \leq \bar m_{\MI{i}}  + \xi
  \end{displaymath}
  Now consider
  \begin{align*}
    \tilde m_{\MI{i}}  - \TE{x}{i} \log \tilde m_{\MI{i}}
    &\leq \bar m_{\MI{i}}  + \xi - \TE{x}{i} \log \alpha^{N+1} \bar  m_{\MI{i}} \\
    &=  \left( \bar m_{\MI{i}}  - \TE{x}{i} \log \bar m_{\MI{i}} \right)
    + \xi - (N+1) \TE{x}{i} \log \alpha \\
    &\leq \left( \bar m_{\MI{i}}  - \TE{x}{i} \log \bar m_{\MI{i}} \right)
    + \xi + (N+1)\TE{x}{i} \log 2.
  \end{align*}
  Thus,
  \begin{displaymath}
    f(\Ttilde{M})
    \leq f(\Tbar{M}) + \xi \prod_n I_n + (N+1)\log 2 \sum_i \TE{x}{i}
    \leq \xi \left( 1 + \prod_n I_n \right) + (N+1)\log 2 \sum_i  \TE{x}{i}.
    \myproofend
  \end{displaymath}
\end{myproof}

Given these two lemmas, we are finally ready to provide the proof of
\Lem{OmegaZeta}.

\smallskip
\begin{proof}[of \Lem{OmegaZeta}]
  Fix $\zeta$. If $\Set{ \TM \in \Omega | f(\TM)
    \leq \zeta }$ is empty, then $\Omega(\zeta)$ is empty and there
  is nothing left to do. Thus, assume $\Set{ \TM \in \Omega | f(\TM)
    \leq \zeta }$ is nonempty.
  Since $f$ is continuous at all $\T{M} \in \Omega$ for which
  $f(\T{M})$ is finite, $f$ is obviously continuous on
  $\Omega(\zeta)$ by \Lem{finiteOnOmegaZeta}. Since $f$ is continuous, $\Set{ \TM \in \Omega |
    f(\TM) \leq \zeta }$ is closed because it is the preimage of the
  closed set $(-\infty, \zeta]$ under $f$; thus,
  $\Omega(\zeta)$ is closed because it is a convex combination of
  closed sets. Consequently, we only need
  to show that  $\Omega(\zeta)$ is bounded.
  Assume the contrary.
  Then there exists a sequence of models $\T{M}\It{k} =
  \KT{\V{\lambda}\It{k};\Mn{A}{1}\It{k},\dots,\Mn{A}{N}\It{k}} \in
  \Omega(\zeta)$
  such that $\V{e}\Tra\V{\lambda}\It{k}
  \rightarrow \infty$. By \Lem{finiteOnOmegaZeta}, $f(\T{M})$ is
  finite on $\Omega(\zeta)$, but this contradicts \Lem{lambda-bound}.
  Hence, the claim.
\end{proof}

\section{Deriving the MM updates}
\label{sec:proof-mm-unique-global-min}

In this section we derive the MM update rules used to solve the subproblem.
We first verify that \Eqn{majorization}
majorizes \Eqn{subproblem-simple}.
For convenience let $\M{C} = \MB\Tra$ so that
\Eqn{subproblem-simple} reduces to
\begin{equation}\label{eq:subproblem_redux}
  \min_{\M{C} \geq 0} f(\M{C}\Tra) =
  \sum_{ij}  \MC{C}{i}\Tra\MC{\pi}{j} - x_{ij}
  \log \left( \MC{C}{i}\Tra\MC{\pi}{j} \right)
\end{equation}

Proofs of the next two lemmas are given by Lee and Seung in \cite{LeSe01} but their arguments
do not carefully handle boundary points. The following two lemmas and their proofs treat with more rigor
the existence and value of updates when anchor points lie on admissible regions of the boundary.

\begin{lemma}\label{lem:mm}
  Let $x \geq 0$ be a scalar and $\V{\pi} \geq 0$, $\V{\pi} \neq 0$,
  be a vector of length $R$. For a vector $\V{c} \geq 0$, $\V{c} \neq
  0$, of length $R$, let the function $f$ be defined by
  \begin{displaymath}
    f(\V{c}) = \V{c}\Tra\V{\pi} - x \log \left( \V{c}\Tra\V{\pi} \right).
  \end{displaymath}
  Then $f$ is majorized at $\Vbar{c} \geq 0$ by
  \begin{displaymath}
    g(\V{c}, \Vbar{c}) = \V{c}\Tra\V{\pi} - x \sum_{r=1}^R \alpha_r
    \log \left( \frac{c_r\pi_r}{\alpha_r} \right)
    \qtext{where}
    \alpha_r = \frac{\bar c_r\pi_r}{\Vbar{c}\Tra\V{\pi}}.
  \end{displaymath}
\end{lemma}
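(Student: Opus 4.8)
The plan is to verify the two defining conditions of majorization directly: that $g(\V{c},\Vbar{c}) \geq f(\V{c})$ for every admissible $\V{c}$, and that $g(\Vbar{c},\Vbar{c}) = f(\Vbar{c})$. Since $f$ and $g$ share the identical linear term $\V{c}\Tra\V{\pi}$, the first thing I would do is cancel it, so that everything reduces to comparing the two logarithmic pieces. If $x = 0$, both logarithmic pieces vanish under the convention $0 \cdot \log(\mu) = 0$, so $g \equiv f$ and there is nothing to prove; hence assume $x > 0$.

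The key structural observation is that the coefficients $\alpha_r = \bar c_r \pi_r / (\Vbar{c}\Tra\V{\pi})$ are nonnegative and satisfy $\sum_r \alpha_r = 1$, so they are the weights of a convex combination. (This requires $\Vbar{c}\Tra\V{\pi} > 0$, i.e.\ that the anchor lies in the admissible region where $f(\Vbar{c})$ is finite; otherwise $\alpha_r$ is not even defined.) I would then write $\V{c}\Tra\V{\pi} = \sum_r c_r \pi_r = \sum_r \alpha_r (c_r\pi_r/\alpha_r)$ and apply Jensen's inequality to the concave function $\log$, obtaining $\log(\V{c}\Tra\V{\pi}) \geq \sum_r \alpha_r \log(c_r\pi_r/\alpha_r)$. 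Multiplying by $-x < 0$ and restoring the common linear term then yields exactly $g(\V{c},\Vbar{c}) \geq f(\V{c})$. For the tangency condition, substituting $\V{c}=\Vbar{c}$ makes every ratio $c_r\pi_r/\alpha_r$ collapse to the constant $\Vbar{c}\Tra\V{\pi}$, so $\sum_r \alpha_r \log(\Vbar{c}\Tra\V{\pi}) = \log(\Vbar{c}\Tra\V{\pi})$ because the weights sum to one, and $g(\Vbar{c},\Vbar{c}) = f(\Vbar{c})$ follows.

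The hard part, and the reason this needs more care than the original Lee--Seung argument, is the behavior on the boundary. When some $\bar c_r = 0$ or $\pi_r = 0$, the corresponding weight $\alpha_r$ vanishes and the ratio $c_r\pi_r/\alpha_r$ is a $0/0$ indeterminate. I would handle this by restricting the Jensen step to the support $S = \{r : \alpha_r > 0\}$, on which $\bar c_r > 0$ and $\pi_r > 0$; on the complement the terms $\alpha_r \log(c_r\pi_r/\alpha_r)$ are read as $0$ via the convention $0 \cdot \log(\cdot) = 0$ and drop out of both the argument of the outer logarithm and the right-hand sum, so that Jensen is invoked only where the weights are genuinely positive. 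I would also dispose of the degenerate case $\V{c}\Tra\V{\pi} = 0$: since $\V{\pi} \neq 0$ this forces $c_r = 0$ for some $r \in S$, making the corresponding $g$-term $+\infty$ while $f(\V{c}) = +\infty$ as well, so the inequality holds vacuously. Once these boundary bookkeeping points are settled, the interior computation is entirely routine.
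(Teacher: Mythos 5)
Your proof is correct and follows essentially the same route as the paper's: write $\V{c}\Tra\V{\pi}$ as a convex combination with weights $\alpha_r$ and invoke concavity of $\log$ (Jensen), dispatching $x=0$ and the zero-weight terms via the convention $0\cdot\log(\mu)=0$. You are in fact more explicit than the paper about the boundary bookkeeping; the only nitpick is that terms with $\alpha_r=0$ but $c_r\pi_r>0$ do not literally ``drop out of the argument of the outer logarithm''---one discards them using monotonicity of $\log$, since $\log(\V{c}\Tra\V{\pi})\geq\log\bigl(\sum_{r\in S}c_r\pi_r\bigr)$---but this does not affect the direction of the inequality.
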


\label{sec:proof-mm}
\begin{proof}
  If $x  = 0$, then $g(\V{c}, \Vbar{c}) = f(\V{c})$ for all $\V{c}$, and $g$ trivially majorizes $f$ at $\Vbar{c}$.
  Consider the case when $x > 0$.
  It is immediate that $g(\Vbar{c}, \Vbar{c}) = f(\Vbar{c})$.
  The majorization follows from the fact that $\log$ is strictly
  concave and that we can write $\V{c}\Tra\V{\pi}$ as a convex
  combination of the elements $c_r \pi_r / \alpha_r$.
  Note that if any elements $\bar c_r \pi_r$ are zero, they do not
  contribute to the sum since we assume %
  $0 \cdot \log(\mu) = 0$ for all $\mu \geq 0$ and $\alpha_r = 0$.
\end{proof}

We now derive an expression for the unique global minimizer
of majorization. The majorization defined in \Eqn{majorization} can be expressed in terms of $\M{C}$ as

\begin{equation}
   \label{eq:majorization_redux}
  g(\M{C}, \Mbar{C}) = \sum_{rij}
  \left [ {c}_{ri} {\pi}_{rj} -
  \alpha_{rij} x_{ij} \log\left( \frac{c_{ri} \pi_{rj}}{\alpha_{rij}}
    \right) \right]
  \qtext{where}
  \alpha_{rij} = \frac{\bar c_{ri}\pi_{rj}}{\sum_r \bar c_{ri} \pi_{rj}}.  
\end{equation}

\begin{lemma}\label{lem:mm-unique-global-min}
  Let $f$ and $g$ be as defined in \Eqn{subproblem_redux} and
  \Eqn{majorization}, respectively. Then, for all $\Mbar{C} \geq
  \M{0}$ such that $f(\Mbar{C}\Tra)$ is finite, the function $g(\cdot,
  \Mbar{C})$ has a unique global minimum $\M{C}\It{*}$ which is given
  by
  $(\M{C}\It{*})_{ri} = \sum_j  \alpha_{rij} x_{ij}$
  where
  $\alpha_{rij} = {\bar
      c_{ri}\pi_{rj}}/{\MbarC{c}{i}\Tra\MC{\pi}{j}}$,
    for all
    $r = 1,\dots,R$, $i = 1,\dots,I$.
\end{lemma}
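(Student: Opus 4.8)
The plan is to exploit the fact that the majorization $g(\cdot,\Mbar{C})$ separates completely across the individual entries of $\M{C}$, and then to reduce the problem to a collection of independent one-dimensional minimizations. Before doing so, I would first confirm that the weights $\alpha_{rij}$ are well defined under the hypothesis that $f(\Mbar{C}\Tra)$ is finite. Indeed, whenever $x_{ij}>0$, finiteness of $f(\Mbar{C}\Tra)$ forces the denominator $\MbarC{c}{i}\Tra\MC{\pi}{j} = \sum_r \bar c_{ri}\pi_{rj}$ to be strictly positive (otherwise the term $-x_{ij}\log(\MbarC{c}{i}\Tra\MC{\pi}{j})$ would be $+\infty$); and whenever $x_{ij}=0$, the corresponding summand contributes nothing to either $g$ or the claimed minimizer, by the standing convention $0\cdot\log(0)=0$. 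Hence every $\alpha_{rij}$ that appears with a nonzero coefficient is a genuine constant in $[0,1]$.

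Next I would collect, for each fixed pair $(r,i)$, exactly those terms of \Eqn{majorization_redux} that depend on $c_{ri}$. Expanding the logarithm and using the row-stochasticity of $\MPi$ (so that $\sum_j \pi_{rj}=1$), these terms reduce to
\[
  h_{ri}(c_{ri}) = c_{ri} - \beta_{ri}\log c_{ri} + \text{const}, \qquad \beta_{ri} \equiv \sum_{j}\alpha_{rij}x_{ij},
\]
where the additive constant $-\sum_j \alpha_{rij}x_{ij}\log(\pi_{rj}/\alpha_{rij})$ is independent of $c_{ri}$. Therefore $g(\M{C},\Mbar{C}) = \sum_{ri} h_{ri}(c_{ri}) + \text{const}$, and minimizing $g$ over $\M{C}\geq\M{0}$ is equivalent to minimizing each scalar function $h_{ri}$ over $c_{ri}\geq 0$ separately.

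Then I would dispose of the scalar problem $\min_{c\geq 0}\, c - \beta\log c$. For $\beta>0$ the function is strictly convex on $(0,\infty)$, tends to $+\infty$ as $c\to 0^+$ and as $c\to\infty$, and has derivative $1-\beta/c$ that vanishes only at $c=\beta$; so the unique minimizer is $c=\beta$. For $\beta=0$ the objective is just $c$, which is minimized uniquely at $c=0=\beta$, matching the same formula. In both cases the minimizer is $c_{ri}=\beta_{ri}=\sum_j \alpha_{rij}x_{ij}$, and since a separable sum of functions with unique minimizers has a unique joint minimizer, assembling these entrywise values yields the claimed global minimum $\M{C}\It{*}$.

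The main obstacle is not any difficult analysis but the careful handling of the boundary. I must verify that the $\alpha_{rij}$ remain meaningful when $\Mbar{C}$ lies on the boundary of the nonnegative orthant — which is precisely what the finiteness of $f(\Mbar{C}\Tra)$ guarantees — and that the minimizer formula survives the degenerate case $\beta_{ri}=0$, where strict convexity of $h_{ri}$ fails yet the unique minimum still sits at $c=0$. These are exactly the situations glossed over in the earlier references, and they are what \Lem{mm} was set up to treat rigorously.
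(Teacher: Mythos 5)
Your proposal is correct and follows essentially the same route as the paper's proof: exploit the entrywise separability of $g(\cdot,\Mbar{C})$, use the row-stochasticity $\sum_j \pi_{rj}=1$ to reduce each entry to the univariate problem $\min_{c\geq 0}\, c-\beta\log c$, and split into the cases $\beta>0$ (strict convexity plus stationarity) and $\beta=0$ (minimum at $c=0$, which matches the formula). Your added remarks on the well-definedness of the $\alpha_{rij}$ under finiteness of $f(\Mbar{C}\Tra)$ are consistent with what the paper delegates to \Lem{mm} and the convention $0\cdot\log(\mu)=0$.
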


\begin{proof}
  Because $g(\M{C}, \Mbar{C})$ separates in the elements of $\M{C}$ we
  focus on solving each elementwise minimization problem.  Dropping
  subscripts, the minimization problem with respect to $c_{ri}$ can be
  rewritten as
  \begin{equation}\label{eq:mm-univariate-global-min}
    \min_{c \geq 0} \; c - \sum_j  \alpha_j x_j \log \left (\frac{c \pi_j}{\alpha_j} \right),
  \end{equation}
  where we have used the fact that $\sum_j \pi_j = 1$. It is
  sufficient to prove that this univariate problem has a unique global
  minimizer, $c\It{*} = \sum_j \alpha_j x_j$.
  First, consider the case where the second term is nonzero. Inspecting the stationarity condition
  reveals the solution. Moreover, the function is
  strictly convex and so has a unique global minimum.  Second,
  consider the case where the second term is zero. Then, it is
  immediate that the unique global minimum is $c\It{*} = 0$. Moreover,
  the second term can only vanish when $\sum_j \alpha_j x_j = 0$, and
  so the formula applies.
\end{proof}

\section{Proof of \Thm{subproblem-convergence}}
\label{sec:subproblem-convergence}
In this section, we prove the MM Algorithm in \Alg{CPAPR} solves
\Eqn{subproblem-simple}. We first establish the following general
result for algorithm maps. Part \Part{limit_points} is a simple version of Zangwill's convergence theorem \cite[p.~91]{Zangwill1969} in the
case where the objective function and algorithm map are both continuous. The proof of part \Part{successive_iterates}
follows arguments of part of a proof for a different but related property on MM iterates in \cite[p.~198]{La04b}.

\begin{theorem}\label{thm:MM_limit_points}
  Let $f$ be a continuous function on a domain $\mathcal{D}$, and
  let $\psi$ be a continuous iterative map from $\mathcal{D}$ into $\mathcal{D}$
  such that $f(\psi(\Vx)) < f(\Vx)$ for all $\Vx \in \mathcal{D}$ with $\psi(\Vx) \neq \Vx$.
  Suppose there is an $\Vx\It{0}$ such that the set
  $\LS{f}{\Vx\It{0}} \equiv \Set{\V{x} \in \mathcal{D} | f(\Vx) \leq f(\Vx\It{0}) }$ is compact.
  Define $\Vx\It{k+1} = \psi(\Vx\It{k})$ for $k = 0, 1, \ldots$. Then
  \begin{inparaenum}[(a)]
  \item \label{part:limit_points}
    the sequence of iterates $\{\Vx\It{k}\}$ has at least one limit point
    and all its limit points are fixed points of $\psi$, and
  \item \label{part:successive_iterates}
    the distance between successive iterates converges to 0,
    i.e., $\lVert \Vx\It{k+1} - \Vx\It{k} \rVert \rightarrow 0$.
  \end{inparaenum}
\end{theorem}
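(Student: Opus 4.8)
The plan is to exploit the monotone-descent property together with compactness of the sublevel set, and to derive part \Part{successive_iterates} from part \Part{limit_points}. First I would note that the hypothesis $f(\psi(\Vx)) < f(\Vx)$ whenever $\psi(\Vx) \neq \Vx$, combined with the trivial equality $f(\psi(\Vx)) = f(\Vx)$ in the fixed-point case, gives $f(\Vx\It{k+1}) \leq f(\Vx\It{k})$ for every $k$. Hence the whole sequence $\{\Vx\It{k}\}$ remains inside $\LS{f}{\Vx\It{0}}$, which is compact by assumption, and the scalar sequence $\{f(\Vx\It{k})\}$ is non-increasing and bounded below (a continuous $f$ attains its infimum on a compact set), so it converges to some value $f^*$.

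For part \Part{limit_points}, compactness immediately yields at least one limit point. Given any limit point $\Vx\It{*}$ with $\Vx\It{k_j} \to \Vx\It{*}$ along a subsequence, continuity of $f$ gives $f(\Vx\It{*}) = \lim_j f(\Vx\It{k_j}) = f^*$. Applying $\psi$ and invoking its continuity yields $\Vx\It{k_j+1} = \psi(\Vx\It{k_j}) \to \psi(\Vx\It{*})$; since $\{f(\Vx\It{k_j+1})\}$ is a subsequence of the convergent sequence $\{f(\Vx\It{k})\}$, continuity of $f$ forces $f(\psi(\Vx\It{*})) = f^* = f(\Vx\It{*})$. The strict-descent hypothesis then rules out $\psi(\Vx\It{*}) \neq \Vx\It{*}$, so $\Vx\It{*}$ is a fixed point of $\psi$.

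For part \Part{successive_iterates} I would argue by contradiction. If $\lVert \Vx\It{k+1} - \Vx\It{k} \rVert$ fails to converge to $0$, there exist $\delta > 0$ and a subsequence with $\lVert \Vx\It{k_j+1} - \Vx\It{k_j} \rVert \geq \delta$; passing to a further subsequence by compactness, I may assume $\Vx\It{k_j} \to \Vx\It{*}$. By part \Part{limit_points}, this $\Vx\It{*}$ is a fixed point, and continuity of $\psi$ gives $\Vx\It{k_j+1} = \psi(\Vx\It{k_j}) \to \psi(\Vx\It{*}) = \Vx\It{*}$. Consequently $\lVert \Vx\It{k_j+1} - \Vx\It{k_j} \rVert \to 0$, contradicting the lower bound $\delta$, and the claim follows.

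The argument is largely routine; the point needing care is that part \Part{successive_iterates} relies on the conclusion of part \Part{limit_points} (every limit point is a fixed point), which itself rests on simultaneously using the monotone convergence of the $f$-values and the continuity of both $f$ and $\psi$ along a single subsequence. The main obstacle is therefore bookkeeping the two successive subsequence extractions correctly so that the limiting identity $f(\psi(\Vx\It{*})) = f(\Vx\It{*})$ can be invoked; once that is in place, everything else is a direct consequence of compactness and the strict-descent inequality.
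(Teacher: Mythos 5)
Your proposal is correct and follows essentially the same route as the paper: monotone descent keeps the iterates in the compact sublevel set, the $f$-values converge to a common limit $f^*$, and continuity of $f$ and $\psi$ along a subsequence forces $f(\psi(\Vx\It{*})) = f(\Vx\It{*})$, which the strict-descent hypothesis converts into the fixed-point property. The only (harmless) difference is in part (b): you invoke the conclusion of part (a) directly to identify the subsequential limit as a fixed point, whereas the paper extracts a second subsequence for $\{\Vx\It{\kl+1}\}$ and re-derives the fixed-point identity from $f(\V{u}) = f(\psi(\V{u}))$ — your shortcut is a slight streamlining of the same argument.
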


\begin{proof}
  The proof of \Part{limit_points} follows that of Proposition 10.3.2 of \cite{La04b}.
  First note that the sequence of iterates must be in $\LS{f}{\Vx\It{0}}$ because
  $f(\Vx\It{k}) \leq f(\Vx\It{0})$ for all $k$.
  Since $\LS{f}{\Vx\It{0}}$ is compact,
  $\{\Vx\It{k}\}$ has a convergent subsequence whose limit is in  $\LS{f}{\Vx\It{0}}$; denote this as
  $\Vx\It{\kl} \rightarrow \Vx\It{*}$ as $\ell \rightarrow \infty$.
  Since $f$ is assumed to be continuous, $\lim f(\Vx\It{\kl}) = f(\Vx\It{*})$.
  Moreover, clearly $f(\Vx\It{*}) \leq f(\Vx\It{\kl})$ for all $\kl$.

  Note that $f(\psi(\Vx\It{\kl})) \leq f(\Vx\It{\kl})$. Taking the limit
  of both sides and applying the continuity of $\psi$ and $f$, we must
  have that $f(\psi(\Vx\It{*})) \leq f(\Vx\It{*})$.  But we also have
  that
  \begin{equation*}
    f(\Vx\It{*}) \leq f(\Vx\It{k_{\ell+1}}) \leq f(\Vx\It{\kl + 1}) = f(\psi(\Vx\It{\kl})).
  \end{equation*}
  Again taking limits we obtain $f(\Vx\It{*}) \leq
  f(\psi(\Vx\It{*}))$. Therefore $f(\Vx\It{*}) =
  f(\psi(\Vx\It{*}))$. But by assumption, this equality implies that
  $\Vx\It{*}$ is a fixed point of $\psi$, and thus \Part{limit_points}
  is proven.

  We now turn to the proof of \Part{successive_iterates}, which
  follows the proof of Proposition 10.3.3 in \cite{La04b}.
  Recall $\{\Vx\It{k}\}$ denotes the iterate sequence.
  Since $f(\Vx\It{k})$ is decreasing and $f$ is bounded below on $\LS{f}{\Vx\It{0}}$,
  we can assert that $f(\Vx\It{k})$ is a convergent sequence with a limit $f\It{*}$.
  Assume the contrary of \Part{successive_iterates}, i.e., that there exists an
  $\epsilon > 0$ and a subsequence $\{\kl\}$ of the indices such that
  \begin{equation}
    \label{eq:xkl}
    \| \Vx\It{\kl + 1} - \Vx\It{\kl} \| > \epsilon
    \text{ for all } \kl.
  \end{equation}
  Note that this subsequence is different from the one discussed in
  proving part~\Part{limit_points}.
  Since $\Vx\It{\kl} \in \LS{f}{\Vx\It{0}}$, by
  possibly restricting $\{\kl\}$ to a further subsequence, we may
  assume that $\Vx\It{\kl}$ converges to a limit $\V{u}$. By possibly
  restricting $\{\kl\}$ to yet a further subsequence, we may
  additionally assume that $\Vx\It{\kl + 1}$ converges to a limit
  $\V{v}$. By \Eqn{xkl}, we can conclude $\|\V{v} - \V{u}\| \geq \epsilon$.
  Note that $\Vx\It{\kl + 1} = \psi(\Vx\It{\kl})$.
  Taking the limit of both sides and using the continuity
  of $\psi$ we obtain $\psi(\V{u}) = \V{v}$.
  Additionally, using the continuity of $f$,
  \begin{equation*}
    f(\V{u}) = \lim_{\ell\rightarrow\infty} f(\Vx\It{\kl}) = f\It{*} =  \lim_{\ell\rightarrow\infty} f(\Vx\It{\kl+1}) = f(\V{v}).
  \end{equation*}
  Since $\V{v} = \psi(\V{u})$, we have that $f(\V{u}) = f(\psi(\V{u}))$ which by assumption occurs if and only if
  $\V{u} = \psi(\V{u})$. This implies that $\V{u} = \V{v}$, and we have arrived at a contradiction.
\end{proof}

We now prove a series of lemmas leading up to a proof of the desired convergence result.

\begin{lemma}
  \label{lem:strict-decrease}
  Let $\MB \geq 0$ such that $f(\MB)$ is finite and suppose $\MB \not
  = \MB \Hada \MPhi$. Then $f(\MB) > f(\MB \Hada \MPhi)$.
\end{lemma}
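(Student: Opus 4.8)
The plan is to run the classical majorization-minimization descent chain and then upgrade its final weak inequality to a strict one by exploiting the \emph{uniqueness} of the global minimizer of the surrogate $g(\cdot,\MB)$ established in \Lem{mm-unique-global-min}. Writing $\psi(\MB) = \MB \Hada \MPhi$ for the MM update, the whole argument reduces to stitching together three facts about $g$ evaluated at the anchor $\MB$.

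First I would record the two defining majorization properties of $g(\cdot,\MB)$ from \Eqn{majorization} and \Lem{mm}: domination, $g(\M{Y},\MB) \geq f(\M{Y})$ for every nonnegative $\M{Y}$, and tangency, $g(\MB,\MB) = f(\MB)$. Next I would invoke \Lem{mm-unique-global-min}, which applies because $f(\MB)$ is finite by hypothesis, to conclude that $\psi(\MB) = \MB\Hada\MPhi$ is the \emph{unique} global minimizer of $g(\cdot,\MB)$. The hypothesis $\MB \neq \MB\Hada\MPhi$ then says precisely that the anchor $\MB$ is not that minimizer, so uniqueness forces the strict inequality $g(\psi(\MB),\MB) < g(\MB,\MB)$. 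Chaining these together gives
\begin{displaymath}
  f(\MB\Hada\MPhi) \;\leq\; g(\MB\Hada\MPhi,\MB) \;<\; g(\MB,\MB) \;=\; f(\MB),
\end{displaymath}
where the first step is domination, the second is strict minimality, and the last is tangency; this is exactly the claim.

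The only point requiring care --- the main, though modest, obstacle --- is bookkeeping about finiteness and boundary behaviour. I must check that $g(\MB\Hada\MPhi,\MB)$ is finite so that the strict inequality is not vacuous: this follows because $\MB\Hada\MPhi$ minimizes $g(\cdot,\MB)$ and $g(\MB,\MB)=f(\MB)$ is finite, so the minimum value is at most $f(\MB)<\infty$. I also need the majorization and uniqueness results to remain valid when $\MB$ has zero entries, which is precisely why \Lem{mm} and \Lem{mm-unique-global-min} were proved with the $0\cdot\log(\mu)=0$ convention and with explicit attention to anchors on the boundary; with those in hand, no further analytic work is needed and the lemma follows from the uniqueness of the surrogate minimizer alone.
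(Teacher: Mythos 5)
Your proposal is correct and follows exactly the paper's argument: invoke \Lem{mm-unique-global-min} to get uniqueness of the surrogate minimizer, conclude $g(\MB\Hada\MPhi,\MB) < g(\MB,\MB)$ since $\MB$ is not that minimizer, and sandwich with the domination and tangency properties of the majorization. The extra remarks on finiteness and boundary anchors are sensible but the paper's proof is the same three-link chain.
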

\begin{proof}
  By \Lem{mm-unique-global-min} $(\MB \Hada \MPhi)\Tra$ is the unique
  global minimizer of $g(\cdot, \MB\Tra)$ which majorizes
  $f$ at $\MB\Tra$. Therefore, if $\MB \neq \MB \Hada \MPhi$, we must have
  $f(\MB) = g(\MB\Tra, \MB\Tra) > g((\MB \Hada \MPhi)\Tra, \MB\Tra)
    \geq f(\MB \Hada \MPhi)$.
\end{proof}

\begin{lemma}\label{lem:level-set-compact}
  Let $f$ be as defined in \Eqn{subproblem-simple}.  
  For any nonnegative matrix $\MB\It{0}$  such that $f(\MB\It{0})$ is
  finite, the level set $\mathcal{L}_f(\MB\It{0}) =
  \Set{\MB \geq 0 | f(\MB) \leq f(\MB\It{0}) }$ is compact.
\end{lemma}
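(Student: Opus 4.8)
The plan is to show that $\mathcal{L}_f(\MB\It{0})$ is closed and bounded, and then invoke the Heine-Borel theorem in the finite-dimensional space $\Real^{I \times R}$. Closedness will be the easy part: the set $\Set{\MB \geq 0 | f(\MB) \leq f(\MB\It{0})}$ is the intersection of the closed nonnegative orthant with the preimage of $(-\infty, f(\MB\It{0})]$ under $f$. Since $f$ is continuous wherever it is finite, and the level value $f(\MB\It{0})$ is finite by hypothesis, the relevant sublevel set is closed. One subtlety is that $f(\MB) = +\infty$ is possible when $(\MB\MPi)_{ij} = 0$ but $x_{ij} > 0$; such points automatically fail the constraint $f(\MB) \leq f(\MB\It{0})$, so they are excluded, and no difficulty arises from the extended-real-valued nature of $f$.

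The main work is boundedness. First I would bound the $\ell_1$-mass of $\MB$. Write $f(\MB) = \sum_{ij}(\MB\MPi)_{ij} - \sum_{ij} x_{ij}\log(\MB\MPi)_{ij}$. Because $\MPi \geq 0$ has rows summing to $1$, summing the first term gives $\sum_{ij}(\MB\MPi)_{ij} = \V{e}\Tra \MB \MPi \V{e} = \V{e}\Tra \MB \V{e} = \|\MB\|_1$, the total mass of $\MB$. Hence $f(\MB) = \|\MB\|_1 - \sum_{ij} x_{ij}\log(\MB\MPi)_{ij}$. The model values $(\MB\MPi)_{ij}$ are each bounded above by $\|\MB\|_1$, so $\sum_{ij} x_{ij}\log(\MB\MPi)_{ij} \leq \left(\sum_{ij} x_{ij}\right)\log\|\MB\|_1$, giving the one-sided bound $f(\MB) \geq \|\MB\|_1 - \vartheta \log\|\MB\|_1$ with $\vartheta = \sum_{ij} x_{ij}$. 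This is exactly the scalar function $g(\alpha) = \alpha - \vartheta\log\alpha$ analyzed in \Lem{lambda-bound}, whose sublevel sets are bounded; therefore $f(\MB) \leq f(\MB\It{0})$ forces $\|\MB\|_1 \leq \xi$ for some constant $\xi$ depending on $\T{X}$ and $f(\MB\It{0})$.

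A total-mass bound controls boundedness directly here: since $\MB \geq 0$, the constraint $\|\MB\|_1 \leq \xi$ already confines every entry of $\MB$ to $[0,\xi]$, so the set is bounded in any norm. Combined with closedness, this yields compactness by Heine-Borel. The step I expect to require the most care is the mass identity and the logarithmic upper bound, since one must use the column-stochastic structure of $\MPi$ (rows summing to one) together with the convention $0\cdot\log\mu = 0$; but once the reduction to $g(\alpha) = \alpha - \vartheta\log\alpha$ is in place, boundedness follows immediately from \Lem{lambda-bound}, so the essential content of this lemma mirrors that earlier argument in the simpler single-block setting.
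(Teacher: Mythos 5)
Your proposal is correct and takes essentially the same route as the paper, whose proof of this lemma is simply the remark that it ``follows the same logic as the proof for \Lem{lambda-bound}'': you reduce boundedness to the scalar sublevel-set analysis of $\alpha - \vartheta\log\alpha$ by using the row-stochasticity of $\MPi$ to identify $\sum_{ij}(\MB\MPi)_{ij}$ with $\|\MB\|_1$ and to bound each $(\MB\MPi)_{ij}$ by $\|\MB\|_1$, and you handle closedness via continuity of $f$ where finite. The only detail worth tightening is that closedness formally rests on lower semicontinuity of $f$ (so that a sequence in the sublevel set cannot converge to a point where $f=+\infty$), but this is immediate from the form of the objective.
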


\begin{proof}
  The proof follows the same logic as the proof for \Lem{lambda-bound}.
\end{proof}

\begin{lemma}\label{lem:MM_iterates_converge}
  Let $f$ be as defined in \Eqn{subproblem-simple} and $\psi$ be as
  defined in \Eqn{mm-iterate}, and suppose
  \As{full_row_rank_mm} is satisfied. For any nonnegative matrix $\MB\It{k}$
  such that $f(\MB\It{0})$ is finite, the sequence 
  $\MB\It{k+1} = \psi(\MB\It{k})$ converges.
\end{lemma}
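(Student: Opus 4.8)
The plan is to invoke \Thm{MM_limit_points} to obtain subsequential convergence, and then to upgrade this to convergence of the entire sequence by combining the strict convexity of \Lem{strict-convexity} with a topological connectedness argument.

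First I would verify the hypotheses of \Thm{MM_limit_points} on the domain $\mathcal{D} = \LS{f}{\MB\It{0}}$, which is compact by \Lem{level-set-compact}. On $\mathcal{D}$ the function $f$ is finite and continuous. The map $\psi$ is also continuous on $\mathcal{D}$: finiteness of $f$ forces $(\MB\MPi)_{ij} > 0$ wherever $x_{ij} > 0$, and a short argument (using that $t \mapsto t - x\log t$ is bounded below) shows these entries are in fact bounded below by a positive constant uniformly over $\mathcal{D}$, so the elementwise division defining $\MPhi$ in \Eqn{mm-iterate} is continuous there. By \Lem{strict-decrease} we have $f(\psi(\MB)) \leq f(\MB)$ with strict inequality whenever $\psi(\MB) \neq \MB$; in particular $\psi$ maps $\mathcal{D}$ into itself and satisfies the strict-descent hypothesis. \Thm{MM_limit_points} then yields that the iterates have at least one limit point, that every limit point is a fixed point of $\psi$ (part~\Part{limit_points}), and that $\lVert \MB\It{k+1} - \MB\It{k}\rVert \to 0$ (part~\Part{successive_iterates}).

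Next I would show that the set of fixed points lying in $\mathcal{D}$ is finite. Since $\nabla f(\MB) = \M{E} - \MPhi(\MB)$, a fixed point $\MB^*$ satisfies $b^*_{ir}\,\bigl(1 - (\MPhi(\MB^*))_{ir}\bigr) = 0$ for every $(i,r)$. Let $\mathcal{Z}^* = \{(i,r) : b^*_{ir} = 0\}$ be its zero pattern and let $F_{\mathcal{Z}^*} = \{\MB \geq 0 : b_{ir} = 0 \text{ for } (i,r) \in \mathcal{Z}^*\}$ be the corresponding face of the nonnegative orthant. On the complement of $\mathcal{Z}^*$ we have $(\MPhi(\MB^*))_{ir} = 1$, i.e.\ the partial derivatives of $f$ in the free coordinates vanish while those coordinates are strictly positive, so $\MB^*$ is a critical point of $f$ restricted to $F_{\mathcal{Z}^*}$ lying in the relative interior of that face. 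Because \As{full_row_rank_mm} holds, \Lem{strict-convexity} gives that $f$ is strictly convex on the convex set $\{\MB : \MB\MPi \neq \M{0}\}$, hence on the convex subset $F_{\mathcal{Z}^*} \cap \{\MB\MPi \neq \M{0}\}$ that contains $\MB^*$. A relative-interior critical point of a strictly convex differentiable function is its unique minimizer over the face, so there is at most one fixed point with any given zero pattern. As there are only finitely many possible zero patterns, there are only finitely many fixed points in $\mathcal{D}$. Finally I would invoke the standard fact that the limit set $L$ of a bounded sequence whose successive differences tend to zero is connected; part~\Part{successive_iterates} supplies exactly this hypothesis. Since $L$ is contained in the finite fixed-point set just obtained, and a connected subset of a finite set is a single point, $L$ is a singleton, and therefore $\{\MB\It{k}\}$ converges.

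The main obstacle is precisely this last upgrade from subsequential to full convergence. One cannot simply appeal to uniqueness of the minimizer of $f$, because a fixed point of $\psi$ need not satisfy the full KKT conditions: an \emph{inadmissible zero} (an index with $b^*_{ir} = 0$ but $(\MPhi(\MB^*))_{ir} > 1$) produces a fixed point that is not stationary, so several distinct fixed points may coexist. The idea that rescues the argument is that strict convexity still pins down \emph{at most one} fixed point per zero pattern, forcing the fixed-point set to be finite, after which connectedness of the limit set forces uniqueness of the limit.
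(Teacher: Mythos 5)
Your proposal is correct and follows essentially the same route as the paper's proof: apply \Thm{MM_limit_points} to get that all limit points are fixed points and that successive differences vanish, use \Lem{strict-convexity} to show there is at most one fixed point per zero pattern (hence finitely many fixed points), and conclude that the limit set is a single point. The only cosmetic difference is that you finish by citing connectedness of the limit set, whereas the paper proves the same fact directly with a disjoint-neighborhoods argument; your explicit verification of the continuity of $\psi$ on the level set is a welcome extra detail the paper leaves implicit.
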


\begin{proof}
  Note that all limit points of $\psi$ are fixed points of $f$ by
  \Thm{MM_limit_points}.  

  First, we show that the set of fixed point is finite.
  Suppose that $\MB$ is a fixed point of $\psi$. Then we must have
  \begin{inlinemath}
    \MB \Hada \left( \M{E} - \MPhi(\MB) \right) = 0.
  \end{inlinemath}
  By \Lem{strict-convexity}, 
  it can be verified that $\MB$ is the \emph{unique}
  global minimizer of
  \begin{displaymath}
    \min \text{~} f(\M{U}) \quad \text{s.t.~} \M{U} \in \Set{ \M{U}
      \geq 0 | \ME{u}{ir} = 0 \text{ if } \ME{b}{ir} = 0},
  \end{displaymath}
  where $f$ is as defined in \Eqn{subproblem-simple}.
  Therefore, any fixed point that has the same zero pattern of $\MB$
  must be equal to $\MB$. Since there are only a finite number of
  possible zero patterns, the number of fixed points is finite.
  
  Since every limit point is a fixed point by
  \Thm{MM_limit_points}\Part{limit_points}, 
  there are only finitely many limit points. 
  Let $\{\mathcal{N}_p\}$ denote a collection of arbitrarily
  small neighborhoods around each fixed point indexed by $p$. Only finitely many
  iterates $\MB\It{k}$ are in %
  $\mathcal{L}_f(\MB\It{0}) - \cup_p \mathcal{N}_p$.
  So, all but finitely many iterates $\MB\It{k}$
  will be in $ \cup_p \mathcal{N}_p$.  But $\lVert \MB\It{k+1} -
  \MB\It{k} \rVert$ eventually becomes smaller than smallest
  distance between any two neighborhoods by 
  \Thm{MM_limit_points}\Part{successive_iterates}. 
  Therefore the sequence $\MB\It{k}$ must
  belong to one of the neighborhoods for all but finitely many
  $k$. So, any sequence of iterates must eventually converge to
  exactly one of the fixed points of $\psi$.
\end{proof}

We now argue that it is impossible for the MM iterate sequence to
converge to a non-KKT point if it has been appropriately initialized.

\begin{lemma}\label{lem:no_convergence_to_non_KKT}
  Let $f$ be as defined in \Eqn{subproblem-simple} and suppose
  \As{full_row_rank_mm} is satisfied.  Suppose $\MB\It{k} \rightarrow
  \MB\It{*}$ is a convergent sequence of iterates defined by
  \Eqn{mm-iterate} with $\MB\It{0} \geq 0$ and $f(\MB\It{0})$ finite.
  If $\left( \MB\It{0} \right)_{ir} > 0$ for all $(i,r)$ such that
  $(\MPhi(\MB\It{*}))_{ir} > 1$, then $\nabla f (\MB\It{*}) \geq
  \M{0}$.
\end{lemma}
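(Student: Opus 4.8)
The plan is to argue by contradiction, exploiting the fixed-point structure of the limit together with a strict positivity property of $\MPhi$ that follows from \As{full_row_rank_mm}. First I would reformulate the goal: since $\nabla f(\MB) = \M{E} - \MPhi(\MB)$, the claim $\nabla f(\MB\It{*}) \geq \M{0}$ is equivalent to $(\MPhi(\MB\It{*}))_{ir} \leq 1$ for every $(i,r)$. So I suppose there is a pair $(i,r)$ with $(\MPhi(\MB\It{*}))_{ir} > 1$ and seek a contradiction. Because $\MB\It{*}$ is the limit of the iterates of the continuous map $\psi$, it is a fixed point, so it satisfies $(\MB\It{*})_{ir}\bigl(1 - (\MPhi(\MB\It{*}))_{ir}\bigr) = 0$; as the second factor is strictly negative, this forces $(\MB\It{*})_{ir} = 0$. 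By hypothesis, the initial iterate satisfies $(\MB\It{0})_{ir} > 0$ for precisely such indices.

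The key structural step is to show that, under \As{full_row_rank_mm}, $(\MPhi(\MB))_{ir} > 0$ for every $(i,r)$ whenever $f(\MB)$ is finite. Writing $(\MPhi(\MB))_{ir} = \sum_{j \in S_i} \frac{x_{ij}}{(\MB\MPi)_{ij}}\,\pi_{rj}$, where $S_i = \Set{j | x_{ij} > 0}$, every summand is nonnegative and each denominator is positive by finiteness of $f$; thus the sum is strictly positive as soon as $\pi_{rj} > 0$ for at least one $j \in S_i$. But linear independence of the rows of $\MPi(:,S_i)$ in \As{full_row_rank_mm} forbids an all-zero row, so such a $j$ always exists. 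Consequently the multiplicative update $(\MB\It{k+1})_{ir} = (\MB\It{k})_{ir}\,(\MPhi(\MB\It{k}))_{ir}$ can never turn a strictly positive entry into a zero, and by induction $(\MB\It{0})_{ir} > 0$ yields $(\MB\It{k})_{ir} > 0$ for all $k$.

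I would then close the contradiction with a continuity-plus-monotonicity argument. Since $f$ strictly decreases along the iteration by \Lem{strict-decrease} and $f(\MB\It{0})$ is finite, the entire sequence lies in the compact level set $\mathcal{L}_f(\MB\It{0})$ of \Lem{level-set-compact}; hence $f(\MB\It{*})$ is finite and $\MPhi$ is continuous at $\MB\It{*}$. Therefore $(\MPhi(\MB\It{k}))_{ir} \to (\MPhi(\MB\It{*}))_{ir} > 1$, and there is an index $K$ with $(\MPhi(\MB\It{k}))_{ir} > 1$ for all $k \geq K$. For such $k$ the update multiplies the strictly positive entry by a factor exceeding one, so the tail $\{(\MB\It{k})_{ir}\}_{k \geq K}$ is strictly increasing and bounded below by $(\MB\It{K})_{ir} > 0$. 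Its limit is then strictly positive, contradicting $(\MB\It{*})_{ir} = 0$.

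The step I expect to be the crux is the strict positivity of $\MPhi$, as this is exactly what upgrades the soft hypothesis ``$\MB\It{0}$ is positive on the potentially inadmissible indices'' into the hard invariant ``those coordinates never collapse to zero.'' Once that invariant is established, the fixed-point identity and the eventual monotonic growth are routine. A subsidiary point to treat carefully is the genuine continuity of $\MPhi$ at $\MB\It{*}$ (equivalently, finiteness of $f(\MB\It{*})$), which is why I route the argument through the compact level set rather than asserting it directly.
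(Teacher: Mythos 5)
Your argument is correct and follows essentially the same route as the paper's proof: contradiction via the fixed-point identity forcing $(\MB\It{*})_{ir}=0$, strict positivity of the iterates at that coordinate, and continuity of $\MPhi$ on the compact level set yielding eventual strict growth of $(\MB\It{k})_{ir}$, contradicting convergence to zero. The only substantive difference is that you supply the proof of the invariant $(\MB\It{k})_{ir}>0$ for all $k$ (via strict positivity of $\MPhi$ under \As{full_row_rank_mm}), a step the paper explicitly leaves to the reader.
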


\begin{proof}
  We give a proof by contradiction.
  Suppose there exists $(i,r)$ such that $\left( \MB\It{0}
  \right)_{ir} > 0$ but $(\nabla f (\MB\It{*}))_{ir} < 0$.
  Since $\MB\It{*}$ is a fixed point of $\psi$, we must have
  $[ 1 - (\MPhi(\MB\It{*}))_{ir} ] (\MB\It{*})_{ir} = 0$.
  By our assumption, however $(\nabla f (\MB\It{*}))_{ir} =
  [ 1 - (\MPhi(\MB\It{*}))_{ir} ] < 0$.
  Thus, we must have $(\MB\It{*})_{ir} = 0$.
  On the other hand, %
  $(\MB\It{k})_{ir} > 0$ for all $k$ (proof left to reader).  
  Since $\MPhi(\cdot)$ is a
  continuous function of $\MB$ on $\mathcal{L}_f(\MB\It{0})$, we know
  that there exists some $K$ such that $k > K$ implies $\MB\It{k}$ is
  close enough to $\MB\It{*}$ such that
  $(\nabla f (\MB\It{k}))_{ir} = [ 1 - (\MPhi(\MB\It{k}))_{ir} ] <
  0$. Since $(\MB\It{k})_{ir} > 0$, we have
  $[ 1 - (\MPhi(\MB\It{k}))_{ir} ] (\MB\It{k})_{ir} < 0$,
  which implies
  $(\MB\It{k})_{ir} < (\MB\It{k+1})_{ir}$ for all $k > K$.
  But this contradicts
  $\lim_{k \rightarrow \infty} (\MB\It{k})_{ir} = (\MB\It{*})_{ir} = 0$.
  Hence, the claim.
\end{proof}

We now prove \Thm{subproblem-convergence}.

\begin{proof} [of \Thm{subproblem-convergence}]
  By \Lem{MM_iterates_converge}, the sequence  $\{\MB\It{k}\}$ converges;
  we call the limit point $\MB\It{*}$.
  At this limit point, we have:
  \begin{inparaenum}[(a)]
    \item $\MB\It{*} \geq 0$,
    \item $\nabla  f(\MB\It{*}) \geq 0$ by \Lem{no_convergence_to_non_KKT},
    \item and $\MB\It{*} \Hada \nabla  f(\MB\It{*}) = 0$ by virtue of
      $\MB\It{*}$  being a fixed point of $\psi$.
  \end{inparaenum}
  Thus, the point $\MB\It{*}$ satisfies the KKT conditions  with respect to
  \Eqn{subproblem-simple}. Furthermore, since $f$ is strictly convex by \Lem{strict-convexity},
  we can conclude that $\MB\It{*}$ is the global minimum of $f$.
\end{proof}

\section{Numerical experiment details for \Sec{compare_objectives}} 
\label{sec:compare_objectives-app}
All implementations are from Version 2.5 of Tensor Toolbox for MATLAB
\cite{TTB_Software}. All methods use a common initial guess for the
solution. 
\begin{itemize}
\item \textbf{Lee-Seung LS}: Implemented in \texttt{cp\_nmu} as
  descibed in \cite{BaKo07}. We use the default
  parameters except that the maximum number of iterations
  (\texttt{maxiters}) is set to 200 and the tolerance on the change
  in the fit (\texttt{tol}) is set to $10^{-8}$.
\item \textbf{CP-ALS}: Implemented in \texttt{cp\_als} as described in
  \cite{BaKo07}.  We use the default parameter settings except that 
  the maximum number iterations (\texttt{maxiters}) is 200 and the
  tolerance on the changes in fit (\texttt{tol}) is $10^{-8}$.
\item \textbf{Lee Seung KL}: Implemented in \texttt{cp\_apr} as
  described in this paper. The parameters are set as follows: $k_{\max} = 200$
  (\texttt{maxiters}), $\ell_{\max}=1$ (\texttt{maxinneriters}), $\tau
  = 10^{-8}$ (\texttt{tol}), $\kappa=0$ (\texttt{kappa}),
  $\epsilon=0$ (\texttt{epsilon}).
\item \textbf{CP-APR}: Implemented in \texttt{cp\_apr} as
  described in this paper. The parameters are set as follows:
  $k_{\max} = 200$
  (\texttt{maxiters}), $\ell_{\max}=10$ (\texttt{maxinneriters}), $\tau
  = 10^{-4}$ (\texttt{tol}), $\kappa=10^{-2}$ (\texttt{kappa}),
  $\kappa_{\text{tol}}=10^{-10}$ (\texttt{kappatol}),
  $\epsilon=0$ (\texttt{epsilon}).
\end{itemize}

We compare the methods in terms of their ``factor match score,''
defined as follows. Let $\T{M} = \KTsmall{\Vl; \Mn{A}{1},\dots,\Mn{A}{N}}$
be the true model and let $\Tbar{M} = \KTsmall{\Vbar{\lambda};
  \Mbarn{A}{1},\dots\,\Mbarn{A}{N}}$ be the computed
solution. 
The score of $\Tbar{M}$ is computed as
\begin{gather*}
  \text{score}(\Tbar{M}) = \frac{1}{R}
  \sum_r \left( 1 - \frac{|\xi_r - \bar \xi_r|}{\max\{\xi_r,\bar\xi_r\}}\right)
  \prod_n
  \frac{\MnCTra{a}{n}{r}\MbarnC{a}{n}{r}}
  {\|\MnC{a}{n}{r}\| \|\MbarnC{a}{n}{r}\|}, \\
  \qtext{where}
  \xi_r = \lambda_r \prod_n \|\MnC{A}{n}{r}\|
  \qtext{and}
  \bar\xi_r = \bar\lambda_r \prod_n \|\MbarnC{A}{n}{r}\|.
\end{gather*}
The FMS is a rather abstract measure, so we also give results for the
number of columns in $\Mn{A}{1}$ that are correctly identified. In
other words, we count the number of times that 
the cosine of the angle between the true solution and
the computed solution is greater than 0.95, mathematically,
${\MnCTra{a}{1}{r}\MbarnC{a}{1}{r}}/{\|\MnC{a}{1}{r}\| \|\MbarnC{a}{1}{r}\|} \geq 0.95$.
We use the first mode, but the results are representative of the other
modes.

\section{Additional Enron results}
\label{sec:enron-app}

\Fig{enron-app} illustrates the four components omitted in
\Fig{enron}.

\begin{figure}[htbp]
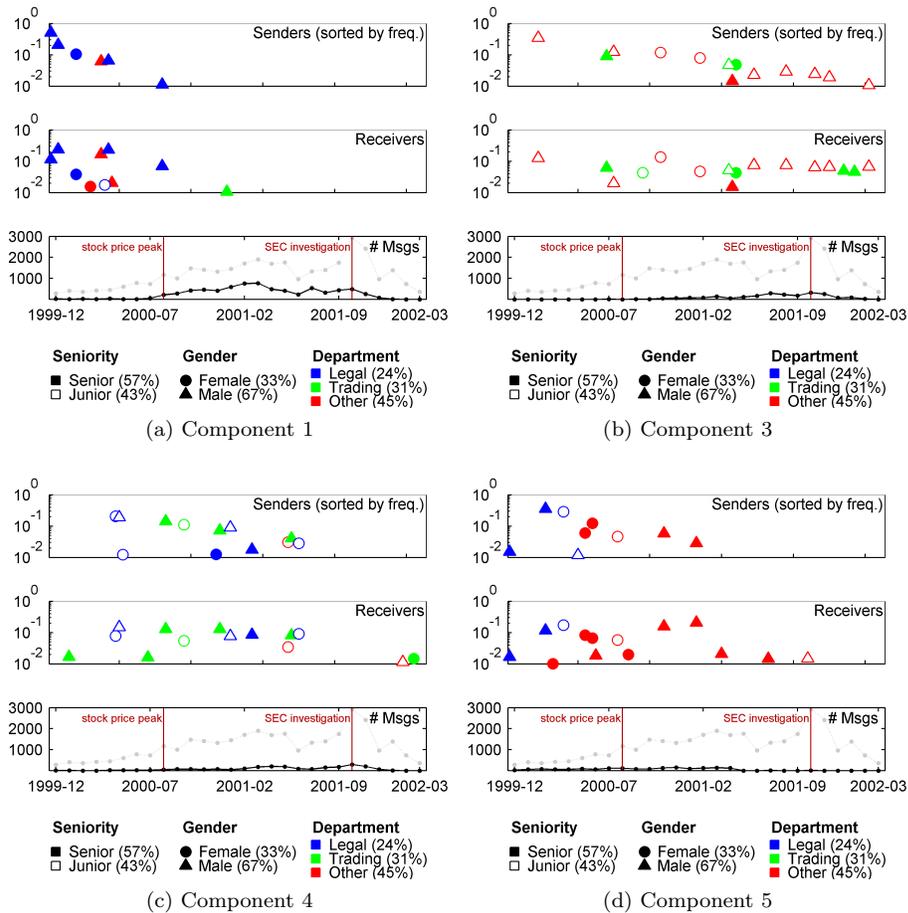

  \centering
  \EnronFig{1}{02}~
  \EnronFig{3}{06}\\
  \EnronFig{4}{07}~
  \EnronFig{5}{09}
  \caption{Remaining components from factorizing the Enron data.}
  \label{fig:enron-app}
\end{figure}

\end{document}